\documentclass[
	english
]{scrartcl}

\usepackage[T1]{fontenc}
\usepackage[utf8]{inputenc}
\usepackage{a4wide}
\usepackage{amsmath,amsfonts,amsthm,amssymb}
\usepackage[colorlinks,citecolor=blue]{hyperref}
\usepackage{doi,natbib}
\usepackage{natbib}
\usepackage{cancel}
\usepackage{relsize}
\usepackage{bbm}

\usepackage{tikz}
\usetikzlibrary{trees,arrows}
\usetikzlibrary{automata}

\tikzstyle{bnode}=[circle ,draw=black,fill=black,thick, inner sep=0pt ,minimum size=2mm]
\tikzstyle{fnode}=[circle ,draw=black,fill=black,thick, inner sep=0pt ,minimum size=4mm]

\usepackage{changes}

\usepackage{enumitem}
\setlist[enumerate]{label=(\alph*)}

\usepackage[figure]{hypcap}
\usepackage{graphicx}
\graphicspath{{./img/}}
\usepackage{subcaption} 

\usepackage{preamble}

\usepackage{marginnote}

\usepackage[capitalise,nameinlink]{cleveref}

\crefname{figure}{Figure}{Figures}
\crefname{table}{Table}{Tables}

\allowdisplaybreaks

\numberwithin{equation}{section}


\newcommand\norm[1]{\left\Vert#1\right\Vert}
\newcommand\nnorm[1]{\Vert#1\Vert}

\newcommand\innerprod[2]{\left( #1, #2\right)}

\newcommand\N{\mathbb{N}}
\newcommand\R{\mathbb{R}}

\newcommand\dx{\mathrm{d}\omega}

\newcommand{\diag}{\operatorname{diag}}

\newcommand{\dt}{\mathrm d t}
\newcommand{\ds}{\mathrm d s}
\renewcommand{\dx}{\mathrm d x}


\hyphenation{pro-ducts pa-ra-me-ters mi-ni-mi-zers}

\DeclareMathAlphabet{\mathpzc}{OT1}{pzc}{m}{it}

\newtheorem{theorem}{Theorem}[section]

\newtheorem{proposition}[theorem]{Proposition}

\newtheorem{remark}[theorem]{Remark}
\newtheorem{definition}[theorem]{Definition}


\definecolor{mygreen}{rgb}{0.0,0.7,0.0}
\definecolor{mybrown}{rgb}{0.5,0.5,0.0}

\newcommand\changed[1]{\textcolor{black}{#1}}

\begin{document}

\title{%
		Inverse demand tracking in transportation networks
	}%
\author{%
	Simone G\"{o}ttlich%
	\footnote{%
		University of Mannheim,
		School of Business Informatics and Mathematics,
		68159 Mannheim,
		Germany,
		\email{goettlich@uni-mannheim.de},
		\orcid{0000-0002-8512-4525}
	}%
	\and
	Patrick Mehlitz%
	\footnote{%
		Philipps-Universit\"at Marburg,
		Department of Mathematics and Computer Science,
		35032 Marburg,
		Germany,
		\email{mehlitz@uni-marburg.de},
		\orcid{0000-0002-9355-850X}
		}%
	\and
	Thomas Schillinger%
	\footnote{%
		University of Mannheim,
		School of Business Informatics and Mathematics,
		68159 Mannheim,
		Germany,
		\email{schillinger@uni-mannheim.de},
		\orcid{0000-0002-9524-327X}
	}%
	}
\date{\today}
\publishers{}
\maketitle

\begin{abstract}
	This paper deals with the reconstruction of the desired demand
	in an optimal control problem, stated over a tree-shaped transportation network which is
	governed by a linear hyperbolic conservation law.
	As desired demands typically undergo fluctuations 
	due to seasonality or unexpected events making 
	short-term adjustments necessary, such an approach can exemplary be used
	for forecasting from past data.
	We suggest to model this problem as a so-called inverse optimal control problem, i.e.,
	a hierarchical optimization problem whose inner problem is the optimal
	control problem and whose outer problem is the reconstruction problem. 
	In order to guarantee the existence of solutions in the function space framework,
	the hyperbolic conservation law is interpreted in weak sense allowing for
	control functions in Lebesgue spaces.
	\changed{%
		For the computational treatment of the model,
		we transfer the hierarchical problem into a nonsmooth single-level one
		by plugging the uniquely determined solution of the inner optimal control problem
		into the outer reconstruction problem
		before applying techniques from nonsmooth optimization.
	}%
	Some numerical experiments are presented to visualize various features of the
	model including different types of noise in the demand 
	and strategies of how to observe the network
	in order to obtain good reconstructions of the desired demand.
\end{abstract}

\begin{keywords}	
	Inverse optimal control, Linear hyperbolic conservation laws, Transportation networks
\end{keywords}

\begin{msc}	
	\mscLink{49J20}, \mscLink{65M32}, \mscLink{90C33}, \mscLink{90C35}
\end{msc}

\section{Introduction}\label{sec:introduction}

Flow problems over energy and supply networks model a broad range of interesting applications,
see \cite{BressanCanicGaravelloHertyPiccoli2014} for a survey.
In this paper, we investigate transportation networks of tree shape where the flow on edges
is modeled, for simplicity, via (linear) hyperbolic conservation laws, 
as typically used for electric transmission lines \cite{GoettlichHertySchillen2016}, 
heating networks \cite{ReinMohringDammKlar2020},
or networks of gas pipelines \cite{BandaHertyKlar2006,GugatSchultzWintergerst2018}. 
A control function is used to model
the inflow at some source vertex, and the aim of optimization is to choose this function 
in such a way that certain desired demands at the sinks of the network are tracked as close
as possible.  
As mentioned in some recent contributions, see
\cite{GoettlichKornLux2019,GoettlichSchillinger2022a,GoettlichSchillinger2022b},
these desirable demands are subject to perturbations, noise, or other sources of stochasticity. 
In the aforementioned papers, this issue has been faced by modeling the problem as a
stochastic optimal control problem which is influenced by randomness via appropriately
chosen stochastic processes.

In this paper, we are concerned with related phenomena.
Let us consider the following practically relevant situation. 
There exists a company (C2) which appoints a second company (C1) to deliver a certain amount of 
electricity/heat/gas at the demand vertices over time by inserting the requested product at
the source of the network over time. In this regard, C1 has to solve the optimal control problem mentioned
above. We now enrich the considered situation by assuming that there is a network operator (NO),
different from C1 and C2, which partially observes the flow along the network and, depending
on this, charges C1 and C2 to pay some tax for employing the network.
As outlined above, the desired demands requested by C2 are subject to stochastic influences and, additionally,
may vary due to a seasonal behavior.
From past data, NO now wants to forecast the desired demand of C2 and the associated actions of C1, 
exemplary for fixing taxes to plan future income. Typically, NO is not aware of the 
desired demand as he only observes the actual network flow along some but, most likely, not all
edges of the network (as it might be expensive to equip the overall network with sensors or to run
them on each edge over all time). Furthermore, the forecasting model should be capable of
recognizing seasonal behavior of the desired demands as it is exemplarily presented for an electricity market in \cite{CoskunKorn2021}.

In order to model this situation, we consider it from the viewpoint of inverse optimal control,
i.e., we aim to identify parameters in an optimal control problem (and not only in a dynamical system). 
Here, the optimal control problem of interest is the aforementioned network flow problem,
and the appearing desired demand plays the role of this parameter.
We assume that we are given observed (but, most likely, noisy)
pairs of optimal inflow and optimal network flow, and aim to reconstruct the desired demands which are modeled as a
convex combination of given ansatz functions. It is, thus, our goal to find the associated
weight parameters which characterize a suitable standard (periodically emerging) choice for the desired demand.
As we are interested in the robustness of our approach, 
we consider additional perturbations in the model and study different types of temporal restrictions in the observation of the network
to evaluate whether these are sufficient for good forecasting.

Naturally, the model of interest is a hierarchical optimization problem with two decision levels.
Coming back to our exemplary situation from above, at the outer (or upper-level) problem, the NO
is in position to partially observe the network and chooses certain weights, which then give a
tangible desired demand. At the inner (or lower-level) problem, C1 now can solve the network
flow problem. Along those parts of the network, which are observed by NO, the latter can
compare the past data and the real-time data obtained from the inner problem for this particular
choice of the weight parameters. Noting that this decision order leads to a well-posed problem,
NO aims to choose the weight parameters in such a way that past data and real-time data
match as good as possible. 
As our model has two decision levels, it is a so-called bilevel optimization problem.

For more than 50 years, bilevel optimization is a major field of research in mathematical programming
due to numerous underlying applications e.g.\ in data science, economy, finance, machine learning, or natural sciences,
see \cite{Bard1998,Dempe2002,ShimizuIshizukaBard1997} for an introduction 
and \cite{Dempe2020} for a recent survey 
which presents an overview of contributions in this
area. Recently, bilevel optimization turned out to be of particular interest in the context of
transportation or energy networks, see e.g.\ \cite{DempeKalashnikovPerezValdesKalashnykova2015}.
This also includes the rapidly growing field of hierarchical control, see e.g.\ \cite{MehlitzWachsmuth2020} for an overview,
and, particularly, so-called inverse optimal control already mentioned earlier,
see \cite{HinzePinnauUlbrichUlbrich2009,Troeltzsch2009,Troutman1996,Vinter2010} for an introduction to the topic of optimal control.
Inverse control possesses several interesting applications e.g.\ in the context of human locomotion, see 
\cite{AlbrechtLeiboldUlbrich2012,AlbrechtUlbrich2017,AnbrechtPassenbergSobotkaPeerBussUlbrich2010,MombaurTruongLaumond2010}.
The theory on inverse optimal control including ordinary and partial differential equations addresses the existence of solutions,
optimality conditions, and solution algorithms, see e.g.\
\cite{DempeHarderMehlitzWachsmuth2019,FriedemannHarderWachsmuth2022,HarderWachsmuth2019,HatzSchloederBock2012,HollerKunischBarnard2018,SuryanSinhaMaloDeb2016} 
and is developing fast.
In abstract bilevel optimization, two decision makers, a leader and a follower, need to choose variables in order to minimize
their associated cost function which also depends on the variables of the other decision maker, respectively.
More precisely, the leader chooses his variables first which are handed over to the follower who now can solve his
optimization problem (which is parametric in the leader's variable) 
\changed{%
	to global optimality.
}%
The solutions are then given to the leader, who now can
evaluate his objective. Often, one assumes that leader and follower cooperate in order to optimize the leader's objective, and
this procedure is referred to as the optimistic approach to the problem,
see \cite{Zemkoho2016} for an overview of other approaches avoiding ill-posedness in bilevel optimization.
The leader's and follower's problem are often referred to as upper- and 
lower-level problem, respectively.
\changed{%
	As the follower has to determine globally optimal solutions of his problem by nature of bilevel optimization,
	one typically requires that the lower-level problem is convex in the follower's variable
	in order to circumvent issues related to nonconvex global optimization
	at the lower-level stage.
}%

We start our investigations by modeling the problem of interest as an inverse control
problem in \cref{sec:model_problem}. 
Therefore, we first study the existence of solutions for linear hyperbolic conservation
laws in a function space which is suitable for optimal control before setting up
the lower- and upper-level problem consecutively.
Furthermore, we demonstrate that the resulting optimization problem possesses
an optimal solution in the function space setting we are investigating.
\changed{%
	In \cref{sec:experiments}, we address the computational treatment of the model.
	\Cref{sec:numerical_treatment} describes our approach to the numerical solution
	of the problem. As it is analytically possible to compute the network
	flow associated with the input, we are in position to distill a state-reduced version of the
	parametric optimal control problem. The associated solution operator, 
	which, at least in pointwise fashion, can similarly be computed analytically due to the nice
	structure of the problem, turns out to be a nonsmooth single-valued mapping.
	Plugging the latter into the superordinate reconstruction problem 
	and performing a suitable discretization, we end up with a nonsmooth optimization 
	which we solve with the aid of MATLAB's \texttt{patternsearch} solver in default mode.
	The general set-up of our computational experiments is carved out in 
	\cref{sec:general_set_up}.
}%
Numerical results are presented in \cref{sec:documentation} in order to visualize the
effectiveness and several different features of the approach.
Particular focus is laid on the robustness of the model 
with respect to additional uncertainties, restricted observation options,
\changed{%
	and the presence of additional inflow constraints.
}%
Some concluding remarks close the paper in \cref{sec:conclusions}.

\section{The model problem}\label{sec:model_problem}

In this section, we set up the model of our interest.
First, we discuss the particular shape of the lower-level parametric optimal
control problem in \cref{sec:lower_level_problem}. Therefore, we first present
the underlying network dynamics and discuss regularity features of associated
solutions. Second, the lower-level objective function is constructed, and 
solvability of the overall lower-level problem is discussed. 
In \cref{sec:upper_level_problem}, we derive the superordinate upper-level problem
and demonstrate that it possesses an optimal solution in the function space setting. 

\subsection{The lower-level problem}\label{sec:lower_level_problem}

\changed{%
	In this subsection, we are concerned with the derivation and analysis 
	of the lower-level optimal control problem.
	To start, we state the lower-level dynamics and 
	discuss existence and uniqueness of solutions associated
	with this system. Afterwards, we set up the (parametric) lower-level problem,
	show that, for each set of parameters, it possesses a unique solution,
	and investigate properties of the associated solution operator.
}%

\subsubsection{Setting up the network and network dynamics}

We consider a directed graph $G=(V,E)$ which is a tree (in the sense that
whenever the directed edges are interpreted as undirected, then the resulting graph
would be free of cycles).
Let us use the notation $V:=\{v_0,\ldots,v_n\}$ and note that $|E|=n$ by
nature of trees.
Some more details on $G$ and the notation we are going to exploit are discussed below.
\begin{itemize}
	\item The uniquely determined source vertex of the network $G$ is $v_0\in V$.
		Furthermore, we assume that $v_0$ is a leaf of $G$, i.e., 
		there is only one edge which leaves $v_0$, and the vertex at
		its end will be denoted by $v_1$.
	\item In $V_D\subset V$, we collect all vertices which possess no outgoing edges.
		These are the demand vertices.
	\item All remaining intermediate (or inner) vertices of the network are collected
		in the set $V_I:=V\setminus (V_D\cup\{v_0\})$.
	\item For $v_i\in V\setminus\{v_0\}$, we identify the uniquely determined
		edge which ends at $v_i$ by $(i)$. 
	\item The set $E^+(i)$ is used to denote the set of all edges starting at vertex $v_i$.
		Furthermore, we use $E_D:=\{(i)\in E\,|\,v_i\in V_D\}$ 
		\changed{%
			to denote the set of edges that end at a demand vertex.
			Clearly, $|E_D|=|V_D|$.
		}%
\end{itemize}

We visualize the above notation in \cref{fig:example_network}.
\changed{%
	For the theory of this paper, 
	it is not mandatory that the vertex $v_0$ possesses just one outgoing edge. 
	One can interpret $v_0$ as an upstream supersource. 
	Besides, this additional assumption
	simplifies the notation because we can abstain from the introduction of
	distribution parameters at the inflow vertex later on.
}%

\begin{figure}[h]
 \begin{center}
	\begin{tikzpicture}[-,>=stealth',shorten >=1pt, semithick]

	\node[bnode,label=left:$v_0$] at (-2,0) (0){};
	\node[bnode,label=above:$v_1$] at (0,0) (1){};
	\node[bnode,label=above:$v_2$] at (2,1) (2){};
	\node[bnode,label=below:$v_3$] at (2,-1) (3){};
	\node[bnode,label=right:$v_4$] at (4,1) (4){};
	\node[bnode,label=right:$v_5$] at (4,0) (5){};
	\node[bnode,label=right:$v_6$] at (4,-1) (6){};
	
	\path[->] (0) edge node[above]{$(1)$} (1);
	\path[->] (1) edge node[above left]{$(2)$} (2);
	\path[->] (1) edge node[below left]{$(3)$} (3);
	\path[->] (2) edge node[above]{$(4)$} (4);
	\path[->] (2) edge node[below]{$(5)$} (5);
	\path[->] (3) edge node[above]{$(6)$} (6);
	\end{tikzpicture}
	\end{center}
	\caption{An exemplary network with $V_D=\{v_4,v_5,v_6\}$, $V_I=\{v_1,v_2,v_3\}$, $E^+(2)=\{(4),(5)\}$, and $E_D=\{(4),(5),(6)\}$.}
	\label{fig:example_network}
\end{figure}
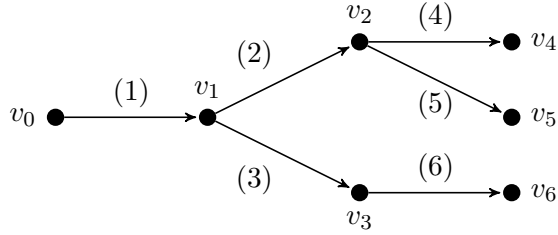

At the source $v_0$, the injection of flow over time $\mathcal T:=(0,T)$, 
where $T>0$ is the final time,
is modeled by the control variable $u\colon\mathcal T\to\R$ which has to be chosen
from an appropriate function space.

The flow over $(i)$ at time $t\in\mathcal T$ at the spatial coordinate $x\in\Omega$ 
will be denoted by $z^{(i)}(t,x)$. 
Here, we assume that $\Omega:=(0,\omega)$ is a bounded real interval.
The density has to obey the linear hyperbolic conservation law
\begin{subequations}\label{eq:network_flow}
	 \begin{align}
	 	\label{eq:network_flow_PDE}
	 		& z^{(i)}_t(t,x)+\lambda^{(i)}z^{(i)}_x(t,x)=0,\qquad
	 			\text{a.e.\ on }\mathcal T\times\Omega,\,(i)\in E,\\
	 	\label{eq:network_flow_initials}
	 		& z^{(i)}(0,x)=0,\qquad
	 			\text{a.e.\ on }\Omega,\,(i)\in E,\\
	 	\label{eq:network_flow_inflow}
	 		& \lambda^{(1)}z^{(1)}(t,0)=u(t),\qquad\text{a.e.\ on }\mathcal T,\\
	 	\label{eq:network_flow_conservation}
	 		& \lambda^{(k)}z^{(k)}(t,0)=\alpha_{i,k}\lambda^{(i)}z^{(i)}(t,\omega),\qquad
	 			\text{a.e.\ on }\mathcal T,\,v_i\in V_I,\,(k)\in E^+(i).
	 \end{align}
\end{subequations} 
Particularly, the flux functions of the conservation law are of linear structure. 
For each $i\in\{1,\ldots,n\}$, $\lambda^{(i)}>0$ is a given constant.
Above, for each $v_i\in V_I$ and $\changed{(k)}\in E^+(i)$, $\alpha_{i,k}>0$ is a 
constant such that $\sum_{\changed{(k)}\in E^+(i)}\alpha_{i,k}=1$ holds, i.e.,
the coefficients $\alpha_{i,k}$ model how the flow splits at vertex $v_i$ into the 
flows along the edges from $E^+(i)$. This way, \eqref{eq:network_flow_conservation} conserves the flow.
We note that the theory can be extended to more general situations.
Exemplary, standard linear damping terms of type $\mu^{(i)}z^{(i)}(t,x)$ can be incorporated
in \eqref{eq:network_flow_PDE} for real constants $\mu^{(i)}>0$ for each $(i)\in E$
without any problem. Under additional assumptions, the coefficients $\lambda^{(i)}$ and
$\mu^{(i)}$ may also depend on time.
\changed{%
	Without loss of generality one could choose $\omega:=1$. 
	However, in order to clearly distinguish between temporal
	and spatial variables in notation, we stick to the seemingly more general
	situation where $\omega>0$ is arbitrary.
	Furthermore, the findings in this paper extend to connected networks without cycles,
	but apart from a more difficult notation, which also allows for vertices where flows are merged, 
	we do not believe that such a model comes along with a significantly different theory.
	We, thus, concentrate on tree-shaped networks.
}%

\subsubsection{Discussion of the hyperbolic conservation law}\label{sec:pde}

Let us first review a classical existence result for the linear hyperbolic conservation law \eqref{eq:network_flow}.
Therefore, we define a suitable control space by
\[
	C^1_{00}(\overline{\mathcal T})
	:=
	\left\{
		u\in C^1(\overline{\mathcal T})\,\middle|\,
		u(0)=0,\,u'(0)=0
	\right\}.
\]
We equip $C^1_{00}(\overline{\mathcal T})$ with the classical $C^1$-norm,
and note that this space is a closed subspace of $C^1(\overline{\mathcal T})$.
The proof of the following result, which is based on the method of characteristics, 
\changed{%
	can be distilled from \cite[Section~3.1, Theorems~3.4 and~3.6]{Bressan2000} 
	under the condition that we only consider positive velocities on the network 
	and, thus, all waves are moving with positive speed.
	}%

\begin{proposition}\label{prop:solution_of_conservation_law}
	For each $u\in C^1_{00}(\overline{\mathcal T})$, the hyperbolic conservation law \eqref{eq:network_flow}
	possesses a unique solution 
	$z:=(z^{(1)},\ldots,z^{(n)})\in C^1(\overline{\mathcal T\times\Omega},\R^n)$.
	The latter is explicitly given by
	\begin{equation}\label{eq:full_characterization_first_arc}
	\forall (t,x)\in\overline{\mathcal T\times\Omega}\colon\quad
	z^{(1)}(t,x)
	=
	\begin{cases}
		\frac{1}{\lambda^{(1)}}u(t-x/\lambda^{(1)})	&	t-x/\lambda^{(1)}>0,\\
		0											&	t-x/\lambda^{(1)}\leq 0
	\end{cases}
	\end{equation}
	on edge $(1)$, and for each $i\in\{1,\ldots,n\}$ such that $v_i\in V_I$ and $(k)\in E^+(i)$,
	we find
	\begin{equation}\label{eq:full_characterization_other_arcs}
	\forall (t,x)\in\overline{\mathcal T\times\Omega}\colon\quad
	z^{(k)}(t,x)
	=
	\begin{cases}
		\alpha_{i,k}\frac{\lambda^{(i)}}{\lambda^{(k)}}z^{(i)}(t-x/\lambda^{(k)},\omega)	
			&	t-x/\lambda^{(k)}>0,
		\\
		0																		
			&	t-x/\lambda^{(k)}\leq 0.
	\end{cases}
	\end{equation}
	Additionally, there is a constant $\kappa>0$, not depending on $u$, such that
	$\nnorm{z}_{C^1(\overline{\mathcal T\times\Omega},\R^n)}\leq\kappa\nnorm{u}_{C^1(\overline{\mathcal T})}$.
\end{proposition}

Let us note that formula \eqref{eq:full_characterization_other_arcs} can be used recursively to
determine the solution along all edges of the network. Indeed, based on \eqref{eq:full_characterization_first_arc},
the solution along all arcs from $E^+(1)$ can be computed. Next, using \eqref{eq:full_characterization_other_arcs},
it is possible to determine the flow along all edges starting in those vertices which are the end vertex of some 
edge in $E^+(1)$. Repeating this procedure, one can iterate through the whole network.

Clearly, \cref{prop:solution_of_conservation_law} justifies to introduce a map
from $C^1_{00}(\overline{\mathcal T})$ to 
$C^1(\overline{\mathcal T\times\Omega},\R^n)$ 
which assigns to each 
control function from $C^1_{00}(\overline{\mathcal T})$ the associated uniquely determined solution of
\eqref{eq:network_flow}.
This mapping is a linear operator which is continuous by \cref{prop:solution_of_conservation_law}.

Since we are interested in the optimal control of the system \eqref{eq:network_flow},
working with the control space $C^1_{00}(\overline{\mathcal T})$ induces some inherent
difficulties. First, this space is nonreflexive, i.e., to show the existence of
optimal solutions for optimization problems over \eqref{eq:network_flow} and the superordinate inverse
optimal control problem, which we state in \cref{sec:upper_level_problem}, would be challenging.
Second, the dual of this space, which naturally arises when using the adjoint approach
for the derivation of optimality conditions, is large and difficult to handle numerically.
It is, thus, a reasonable task to reconsider \eqref{eq:network_flow} from the
viewpoint of control functions $u\in L^2(\mathcal T)$.
\changed{%
	Besides, this choice allows for discontinuous controls which can be exploited 
	to model switches in the inflow.
	Observe that \eqref{eq:network_flow} does not need to possess a classical
	solution in the sense of \cref{prop:solution_of_conservation_law} anymore
	whenever the control function is not continuously differentiable. 
	To proceed, we follow \cite[Section~2.2]{Keimer2014}, see \cite[Section~2]{GugatKeimerLeugeringWang2015} as well,
	}%
to introduce a suitable weak formulation of \eqref{eq:network_flow} as stated below.
First, for the state $z^{(1)}$, we demand
\begin{equation}\label{eq:network_flow_weak_arc_1}
			\changed{\int_{\mathcal T_\tau}\int_\Omega}
				z^{(1)}(t,x)(\varphi_t(t,x)+\lambda^{(1)}\varphi_x(t,x))\changed{\dx\dt}
				=
				-\changed{\int_{\mathcal T_\tau}} u(t)\varphi(t,0)\dt
			\quad
			\forall\varphi\in \changed{W_\tau}
\end{equation}
\changed{%
	for all $\tau\in\overline{\mathcal T}$,
	where $\mathcal T_\tau:=(0,\tau)$ and
	}%
\[
	\changed{%
	W_\tau :=\left\{\varphi\in C^1(\overline{\mathcal T_\tau\times\Omega})\,\middle|\,
			\begin{aligned}
				\varphi(\cdot,\omega)&=0&&\text{ on }\overline{\mathcal T_\tau}
				\\
				\varphi(\tau,\cdot)&=0&&\text{ on }\overline\Omega
			\end{aligned}
		\right\}
	}%
\]
is the space of test functions.
Similarly as above, we demand
\begin{equation}\label{eq:network_flow_weak_other_arcs}
	\changed{\int_{\mathcal T_\tau}\int_\Omega}
				z^{(k)}(t,x)(\varphi_t(t,x)+\lambda^{(k)}\varphi_x(t,x))\changed{\dx\dt}
				=
				-
				\alpha_{i,k}\lambda^{(i)}
				\changed{\int_{\mathcal T_\tau}}z^{(i)}(t,\omega)\varphi(t,0)\dt
			\quad
			\forall\varphi\in \changed{W_\tau}
\end{equation}
\changed{%
	for all $\tau\in\overline{\mathcal T}$,
	}%
$v_i\in V_I$, and $(k)\in E^+(i)$.
\changed{%
	A function $z\in C(\overline\Omega,L^2(\mathcal T,\R^n))$ satisfying these requirements
	is referred to as a weak solution of the hyperbolic conservation law \eqref{eq:network_flow}.
	Recall that the function space $C(\overline\Omega,L^2(\mathcal T,\R^n))$ comprises all functions
	$z\colon\mathcal T\times\overline\Omega\to\R^n$ such that, for each $x\in\overline\Omega$,
	$z(\cdot,x)$ belongs to $L^2(\mathcal T,\R^n)$, and
	$\overline\Omega\ni x\to z(\cdot,x)\in L^2(\mathcal T,\R^n)$ is continuous.
}%
Let us emphasize that the boundary conditions 
\eqref{eq:network_flow_initials}, \eqref{eq:network_flow_inflow}
are incorporated in this alternative formulation of the dynamics also in weak sense
only (by definition of the space \changed{$W_\tau$}) 
since pointwise considerations are meaningless in Lebesgue spaces. 

The following result shows that the (classical) solution characterized 
in \cref{prop:solution_of_conservation_law} 
\changed{%
	(with controls chosen from $C^1_{00}(\overline{\mathcal T})$)
	also provides the uniquely determined weak solution of the hyperbolic conservation law \eqref{eq:network_flow}
	}%
if the control is chosen from $L^2(\mathcal T)$.

\begin{proposition}\label{prop:weak_solution_of_conservation_law}
	For each $u\in L^2(\mathcal T)$, the function
	\changed{%
		$z:=(z^{(1)},\ldots,z^{(n)})\in C(\overline\Omega,L^2(\mathcal T,\R^n))$
	}%
	characterized via \eqref{eq:full_characterization_first_arc}, \eqref{eq:full_characterization_other_arcs}
	\changed{%
		is the uniquely determined weak solution of
		the hyperbolic conservation law \eqref{eq:network_flow}.
	}%
	Additionally, there is a constant $\kappa>0$, not depending on $u$, such that
	\changed{%
		$\norm{z}_{C(\overline\Omega,L^2(\mathcal T,\R^n))}\leq\kappa\norm{u}_{L^2(\mathcal T)}$.
	}%
\end{proposition}
\begin{proof}
Let us start to show that $z^{(1)}$ given in \eqref{eq:full_characterization_first_arc}
satisfies \eqref{eq:network_flow_weak_arc_1} for 
\changed{each $\tau\in\overline{\mathcal T}$ and} given $u\in L^2(\changed{\mathcal T})$.
Therefore, we introduce a function
$\bar u\in L^2((-\omega/\lambda^{(1)},T))$ by
\[
	\forall t\in(-\omega/\lambda^{(1)},T)\colon\quad
	\bar u(t):=
	\begin{cases}
		u(t)	&	t>0,\\
		0		&	t\leq 0.
	\end{cases}
\]
Using a coordinate transformation with respect to the new domain
\[
	\changed{\Xi_\tau}
	:=
	\{
		(s,x)\in\R^2\,|\,x\in\Omega,\,s\in(-x/\lambda^{(1)},\changed{\tau}-x/\lambda^{(1)})
	\},
\]
we find, for each $\varphi\in \changed{W_\tau}$ and
$\bar\varphi(s,x):=\varphi(s+x/\lambda^{(1)},x)$ for all $(s,x)\in \changed{\Xi_\tau}$, the identities
\begin{align*}
	&\changed{%
		\int_{\mathcal T_\tau}\int_\Omega
			z^{(1)}(t,x)(\varphi_t(t,x)+\lambda^{(1)}\varphi_x(t,x))\dx\dt
			}%
	\\
	&\qquad 
		= \frac{1}{\lambda^{(1)}}\changed{\int_{\mathcal T_\tau}\int_\Omega}
		\bar u(t-x/\lambda^{(1)})(\varphi_t(t,x)+\lambda^{(1)}\varphi_x(t,x))\dx\dt\\
	&\qquad
		= \iint_{\changed{\Xi_\tau}}\bar u(s)\bar\varphi_x(s,x)\,\mathrm d(s,x)\\
	&\qquad
		= \int_{-\omega/\lambda^{(1)}}^{\changed{\tau}}
		\int_{\max(0,-\lambda^{(1)}s)}^{\min(\omega,\lambda^{(1)}(\changed{\tau}-s))}
			\bar u(s)\bar\varphi_x(s,x)\dx\ds\\
	&\qquad
		=\int_0^{\changed{\tau}} u(s)\int_0^{\min(\omega,\lambda^{(1)}(\changed{\tau}-s))}
			\bar\varphi_x(s,x)\dx\ds\\
	&\qquad
		=\int_0^{\changed{\tau}} u(s)(\bar\varphi(s,\min(\omega,\lambda^{(1)}(\changed{\tau}-s)))-\bar\varphi(s,0))\ds
			\\
	&\qquad
		=-\int_{\changed{\mathcal T_\tau}} u(s)\varphi(s,0)\ds.
\end{align*}
Above, we used 
\changed{%
	the fact that the determinant of the Jacobian associated with the chosen coordinate transform is $1$,
}%
the fundamental theorem of calculus, and
\[
	\bar\varphi(s,\min(\omega,\lambda^{(1)}(\changed{\tau}-s)))
	=
	\begin{cases}
		\varphi(\changed{\tau},\lambda^{(1)}(\changed{\tau}-s))=0	&	\text{if }\lambda^{(1)}(\changed{\tau}-s)<\omega,\\
		\varphi(s+\omega/\lambda^{(1)},\omega) =0	& 	\text{if } \lambda^{(1)}(\changed{\tau}-s)\geq\omega,
	\end{cases}
\]
which holds by definition of the space \changed{$W_\tau$} of test functions.
Thus, the function from \eqref{eq:full_characterization_first_arc} solves
\eqref{eq:network_flow_weak_arc_1}. 

\changed{%
	Next, we will verify $z^{(1)}\in C(\overline\Omega,L^2(\mathcal T))$.
	By definition of $z^{(1)}$, $z^{(1)}(\cdot,x)\in L^2(\mathcal T)$ is obvious for each $x\in\overline\Omega$.
	Let us pick $x_1,x_2\in\overline\Omega$ such that, without loss of generality,
	$x_1<x_2$.
	Then we have
	\begin{align*}
		&\nnorm{z^{(1)}(\cdot,x_1)-z^{(1)}(\cdot,x_2)}^2_{L^2(\mathcal T)}
		\\
		&\qquad
		=
		\frac{1}{(\lambda^{(1)})^2}\int_0^T\bigl(\bar u(t-x_1/\lambda^{(1)})-\bar u(t-x_2/\lambda^{(1)})\bigr)^2\dt
		\\
		&\qquad
		=\frac{1}{(\lambda^{(1)})^2}\left(
			\int_{x_1/\lambda^{(1)}}^{x_2/\lambda^{(1)}}u^2(t-x_1/\lambda^{(1)})\dt
			+
			\int_{x_2/\lambda^{(1)}}^{T}\bigl(u(t-x_1/\lambda^{(1)})-u(t-x_2/\lambda^{(1)})\bigr)^2\dt
			\right).
	\end{align*}
	As $|x_1-x_2|\to 0$, the first of these integrals trivially tends to $0$,
	and one can exploit similar arguments as used to prove
	\cite[Theorems~4.20, 4.21]{Dobrowolski2006}
	in order to verify that the second integral tends to $0$ as well.
	Hence, $x\mapsto z^{(1)}(\cdot,x)$ is continuous on $\overline\Omega$,
	i.e., $z^{(1)}\in C(\overline\Omega,L^2(\mathcal T))$.
}%
\changed{%
Let us also note that
\begin{align*}
	\nnorm{z^{(1)}}^2_{C(\overline\Omega,L^2(\mathcal T))}
	&=
	\max\limits_{x\in[0,\omega]}\nnorm{z^{(1)}(\cdot,x)}_{L^2(\mathcal T)}^2
	\\
	&=
	\frac{1}{(\lambda^{(1)})^2}\max\limits_{x\in[0,\omega]}\int_0^T\bar u^2(t-x/\lambda^{(1)})\dt
	\\
	&=
	\frac{1}{(\lambda^{(1)})^2}\max\limits_{x\in[0,\omega]}
	\int_{0}^{\max(0,T-x/\lambda^{(1)})}u^2(s)\ds
	\\
	&\leq 
	\frac{1}{(\lambda^{(1)})^2}\max\limits_{x\in[0,\omega]}\int_{0}^{T}u^2(s)\ds
	=
	\frac{1}{(\lambda^{(1)})^2}\norm{u}^2_{L^2(\mathcal T)},
\end{align*}
which gives
\[
	\nnorm{z^{(1)}}_{C(\overline\Omega,L^2(\mathcal T))}
	\leq
	(\lambda^{(1)})^{-1}\norm{u}_{L^2(\mathcal T)}.
\]
}%

The above arguments can be repeated in order to show that, for each $i\in\{1,\ldots,n\}$ such that
$v_i\in V_I$ and $(k)\in E^+(i)$, the function $z^{(k)}$ characterized in \eqref{eq:full_characterization_other_arcs}
satisfies \eqref{eq:network_flow_weak_other_arcs} 
\changed{%
for each $\tau\in\overline{\mathcal T}$, belongs to $C(\overline\Omega,L^2(\mathcal T))$,
and obeys the estimate
\[
	\nnorm{z^{(k)}}_{C(\overline\Omega,L^2(\mathcal T))}
	\leq
	\frac{\lambda^{(i)}}{\lambda^{(k)}}\nnorm{z^{(i)}}_{C(\overline\Omega,L^2(\mathcal T))}.
\]
}%

\changed{%
By iteration through the whole network, we obtain that the considered function $z$ is
a weak solution of the hyperbolic conservation law \eqref{eq:network_flow} which
belongs to $C(\overline\Omega,L^2(\mathcal T,\R^n))$.
Finally, uniqueness of the weak solution follows from \cite[Theorem~3.1.1]{Keimer2014}.
}%
\end{proof}

\Cref{prop:weak_solution_of_conservation_law} motivates the following definition.

\begin{definition}\label{def:solution_operator_conservation_law}
	Let $S\colon L^2(\mathcal T)\to \changed{C(\overline\Omega,L^2(\mathcal T,\R^n))}$ be the operator which assigns
	to each $u\in L^2(\mathcal T)$ the 
	\changed{%
		uniquely determined weak solution of the hyperbolic conservation law \eqref{eq:network_flow}
	}%
	which has been discussed in \cref{prop:weak_solution_of_conservation_law}. 
	For each $u\in L^2(\mathcal T)$, we make use of the notation $S^{(i)}(u)$ 
	\changed{%
		and $S^{(i)}_\omega(u)$
	}%
	to address the $i$-th component function of $S(u)$
	\changed{%
		and the function $S^{(i)}(u)(\cdot,\omega)\in L^2(\mathcal T)$
	}%
	for all $i\in\{1,\ldots,n\}$,
	\changed{
		respectively.
	}%
\end{definition}

The operator $S$ defined above encapsulates all the information provided by \eqref{eq:network_flow} and later on allows us to state
all optimization problems of interest in compact, state-reduced form.

\begin{remark}\label{rem:continuity_of_solution_operator}
	Due to \cref{prop:weak_solution_of_conservation_law}, the operator $S$ is linear and continuous.
	Furthermore, for each $i\in\{1,\ldots,n\}$, 
	\changed{%
		$S^{(i)}\colon L^2(\mathcal T)\to C(\overline\Omega,L^2(\mathcal T))$ 
		and
		$S^{(i)}_\omega\colon L^2(\mathcal T)\to L^2(\mathcal T)$
		are linear and continuous operators.
		}%
\end{remark}

\subsubsection{The lower-level problem and its solution operator}\label{sec:lower_level_solution}

Now, we are ready to formulate the optimal control problem of interest.
In general, the control function $u\in L^2(\mathcal T)$ has to be chosen 
\changed{%
	from a certain set of feasible controls
}%
such that a given demand is tracked while the control effort is
minimal. This is achieved by minimizing the expression
\begin{equation}\label{eq:lower_level_objective}
	f(u,\beta)
	:=
	\frac12\sum\limits_{(i)\in E_D}
		\norm{
			\changed{S^{(i)}_\omega}(u)-\mathsmaller\sum\nolimits_{\ell=1}^m\beta^{(i)}_\ell D^{(i)}_\ell
			}_{\changed{L^2(\mathcal T)}}^2
	+
	\frac{\sigma}{2}\norm{u}^2_{L^2(\mathcal T)}
\end{equation}
\changed{%
with respect to $u\in U_\textup{ad}\subset L^2(\mathcal T)$,
where
\begin{equation}\label{eq:Uad}
	U_\textup{ad}:=
	\left\{
		u\in L^2(\mathcal T)\,|\,
		u_\textup{a}(t)\leq u(t)\leq u_\textup{b}(t)\text{ a.e.\ on }\mathcal T
	\right\}
\end{equation}
is a standard box-constrained set induced by given measurable functions
$u_\textup{a}\colon\mathcal T\to\{-\infty\}\cup\R$ and $u_\textup{b}\colon\mathcal T\to\R\cup\{\infty\}$,
and we assume that $U_\textup{ad}$ is nonempty.
In \eqref{eq:lower_level_objective},
}%
$m\in\N$ such that $m\geq 2$ is a fixed natural number which does not depend on $(i)$ for simplicity.
Recall that $S(u)$ is used to represent the (weak) solution of \eqref{eq:network_flow} which we discussed in \cref{sec:pde},
$S^{(i)}(u)$ is the component of $S(u)$ which is associated with the edge $(i)\in E$, $i\in\{1,\ldots,n\}$,
\changed{%
and $S^{(i)}_\omega(u)\in L^2(\mathcal T)$ is obtained from $S^{(i)}(u)$ by fixing the spatial variable to $x:=\omega$,
see \cref{def:solution_operator_conservation_law} as well.
When controlling network flows, one typically tries to track the demand only
by means of the outflow at the demand vertices over time,
and this is reflected by definition of \eqref{eq:lower_level_objective}. 
The appearing regularization term aims to minimize the control effort 
and, in parallel, guarantees uniqueness of lower-level solutions 
as we will see later on, see \cref{prop:uniqueness_of_lower_level_solutions} below.
}%

In \eqref{eq:lower_level_objective}, for each $(i)\in E_D$, 
$D_1^{(i)},\ldots,D_m^{(i)}\in \changed{L^2(\mathcal T)}$ are typical demand profiles, 
i.e., suitable ansatz functions 
\changed{%
	modeling the outflow at demand vertices over time.
	}%
Furthermore, $\sigma>0$ is a regularization parameter.
The weights $\beta^{(i)}\in\Lambda^m$, where
\begin{equation}\label{eq:unit_simplex}
	\Lambda^m
	:=
	\left\{b\in\R^m\,\middle|\,b\geq 0,\,\mathsmaller\sum\nolimits_{\ell=1}^m b_\ell=1\right\}
\end{equation}
denotes the standard simplex in $\R^m$, are, at the lower-level stage, 
fixed and shall be reconstructed from measurements in a superordinate optimization problem, see \cref{sec:upper_level_problem}.
For fixed $(i)\in E_D$, $\sum_{\ell=1}^m\beta^{(i)}_\ell D^{(i)}_\ell$ 
plays the role of the desired demand, i.e., the outflow 
\changed{%
	at demand vertex $v_i$ over time
}%
we are seeking.
In this regard and by definition of $\Lambda^m$, $D^{(i)}_1,\ldots,D^{(i)}_m$ 
play the role of extreme cases for the desired demand 
as $\sum_{\ell=1}^m\beta^{(i)}_\ell D^{(i)}_\ell$ is a convex combination of these functions. 
If $D^{(i)}_1,\ldots,D_m^{(i)}$ are interpreted merely as basis functions of a suitable finite-dimensional
subspace of \changed{$L^2(\mathcal T)$}, where the desired demand has to be chosen from, then the weights $\beta^{(i)}$ can be taken as
arbitrary vectors from $\R^m$, resulting in an easier superordinate reconstruction problem.
In \eqref{eq:lower_level_objective}, we used $\beta:=(\beta^{(i)})_{(i)\in E_D}\in(\R^m)^{|V_D|}$ for brevity of notation.

We investigate the lower-level optimal control problem
\begin{equation}\label{eq:lower_level}\tag{LL$(\beta)$}
	\min\limits_{u}\{f(u,\beta)\,|\,u\in \changed{U_\textup{ad}}\}.
\end{equation}
Here, $f\colon L^2(\mathcal T)\times (\R^m)^{|V_D|} \to\R$
is the function defined in \eqref{eq:lower_level_objective},
\changed{%
	and the set of feasible controls $U_\textup{ad}$ is given in \eqref{eq:Uad}.
}%
Observing that the operators \changed{$S^{(i)}_\omega\colon L^2(\mathcal T)\to L^2(\mathcal T)$}, 
$(i)\in E_D$, are linear and continuous while keeping the
presence of the nonvanishing regularization term $\tfrac{\sigma}{2}\norm{u}^2_{L^2(\mathcal T)}$ in mind, the function 
$f(\cdot,\beta)$ is continuous and \changed{uniformly} convex for each $\beta\in(\R^m)^{|V_D|}$. 
\changed{%
	Furthermore, $U_\textup{ad}$ is, by construction, convex and closed.
}%
Thus, noting that $L^2(\mathcal T)$ is a reflexive Banach space, 
\eqref{eq:lower_level} possesses a uniquely determined \changed{(global)} minimizer,
\changed{%
see e.g.\ \cite[Theorem~2.16]{Troeltzsch2009} for a similar result.
}%

\begin{proposition}\label{prop:uniqueness_of_lower_level_solutions}
	For each $\beta\in(\R^m)^{|V_D|}$, \eqref{eq:lower_level} possesses a \changed{uniquely determined (global) minimizer}.
\end{proposition}

The above result motivates the following definition.
\begin{definition}\label{def:lower_level_solution_operator}
	Let $\Psi\colon(\R^m)^{|V_D|}\to L^2(\mathcal T)$ be the operator which assigns to each
	$\beta\in(\R^m)^{|V_D|}$ the uniquely determined \changed{(global) minimizer} of \eqref{eq:lower_level}.
\end{definition}

\changed{%
In the subsequent result, we show via standard arguments that the mapping $\Psi$ defined in \cref{def:lower_level_solution_operator} is Lipschitz continuous.
}%

\begin{proposition}\label{prop:lower_level_solution_operator}
	The mapping $\Psi$ is
	\changed{%
		Lipschitz continuous.
		}%
\end{proposition}
\begin{proof}
\changed{%
	For $\iota=1,2$, we choose $\beta_\iota\in(\R^m)^{|V_D|}$ and set $\bar u_\iota:=\Psi(\beta_\iota)$.
	Convexity and continuous Fr\'{e}chet differentiability of $f(\cdot,\beta_\iota)$ 
	as well as convexity and closedness of $U_\textup{ad}$ yield
	that $\bar u_\iota$ is the global minimizer of
	\hyperref[eq:lower_level]{(LL$(\beta_\iota)$)} if and only if the condition
	\begin{equation}\label{eq:lower_level_optimality}
		\forall u\in U_\textup{ad}\colon\quad
		\innerprod{f'(\bar u_\iota,\beta_\iota)}{u-\bar u_\iota}_{L^2(\mathcal T)}\geq 0
	\end{equation}
	is valid, see \cite[Lemma~2.21]{Troeltzsch2009}. Above, 
	$\innerprod{\cdot}{\cdot}_{L^2(\mathcal T)}\colon L^2(\mathcal T)\times L^2(\mathcal T)\to\R$
	denotes the standard inner product of the Hilbert space $L^2(\mathcal T)$.
}%

	Applying the chain rule, 
	we find an explicit formula for the derivative of $f(\cdot,\beta_\iota)$ with respect to $u$.
	More precisely, we have 
	\begin{equation}\label{eq:derivative_of_f}
		f'_u(u,\beta_\iota)=\mathcal A(u)-\mathcal B(\beta_\iota)
	\end{equation} 
	for the continuous linear operators
	$\mathcal A\colon L^2(\mathcal T)\to L^2(\mathcal T)$ 
	and $\mathcal B\colon (\R^m)^{|V_D|}\to L^2(\mathcal T)$ given by
	\[
		\forall u\in L^2(\mathcal T)\colon\quad
		\mathcal A(u)
		:=
		\sum_{(i)\in E_D}(\changed{(S^{(i)}_\omega)^*\circ S^{(i)}_\omega})(u)+\sigma u,
	\]
	where, for each $(i)\in E_D$, 
	\changed{$(S^{(i)}_\omega)^*\colon L^2(\mathcal T)\to L^2(\mathcal T)$}
	is the adjoint of \changed{$S^{(i)}_\omega\colon L^2(\mathcal T)\to L^2(\mathcal T)$},
	which is a linear, continuous operator again, 
	see \cref{rem:continuity_of_solution_operator}, and
	\[
		\forall \beta\in(\R^m)^{|V_D|}\colon\quad
		\mathcal B(\beta)
		:=
		\sum_{(i)\in E_D}\sum_{\ell=1}^m
			\beta^{(i)}_\ell\changed{(S^{(i)}_\omega)^*}(D^{(i)}_\ell).
	\]

\changed{%
	Hence, from \eqref{eq:lower_level_optimality} and \eqref{eq:derivative_of_f}, we find
	\begin{align*}
		\innerprod{\mathcal A(\bar u_1)-\mathcal B(\beta_1)}{\bar u_2-\bar u_1}_{L^2(\mathcal T)}
		&\geq 0,\\
		\innerprod{\mathcal A(\bar u_2)-\mathcal B(\beta_2)}{\bar u_1-\bar u_2}_{L^2(\mathcal T)}
		&\geq 0.
	\end{align*}
	Adding up these inequalities and performing some rearrangements yields
	\begin{equation}\label{eq:standard_estimate}
		\innerprod{\mathcal A(\bar u_1-\bar u_2)}{\bar u_1-\bar u_2}_{L^2(\mathcal T)}
		\leq
		\innerprod{\mathcal B(\beta_1-\beta_2)}{\bar u_1-\bar u_2}_{L^2(\mathcal T)}.
	\end{equation}
	By definition of $\mathcal A$, we find
	\begin{align*}
		&\innerprod{\mathcal A(\bar u_1-\bar u_2)}{\bar u_1-\bar u_2}_{L^2(\mathcal T)}
		\\
		&\qquad
		=
		\sum_{(i)\in E_D}
			\innerprod{((S^{(i)}_\omega)^*\circ S^{(i)}_\omega)(\bar u_1-\bar u_2)}
			{\bar u_1-\bar u_2}_{L^2(\mathcal T)}
		+
		\sigma \nnorm{\bar u_1-\bar u_2}^2_{L^2(\mathcal T)}
		\\
		&\qquad=
		\sum_{(i)\in E_D}\nnorm{S^{(i)}_\omega(\bar u_1-\bar u_2)}_{L^2(\mathcal T)}^2
		+
		\sigma \nnorm{\bar u_1-\bar u_2}^2_{L^2(\mathcal T)}
		\\
		&\qquad\geq
		\sigma \nnorm{\bar u_1-\bar u_2}^2_{L^2(\mathcal T)},
	\end{align*}	 
	and due to the continuity of $\mathcal B$, there is a constant $\kappa>0$,
	not depending on $\beta_1$ and $\beta_2$, such that
	\begin{align*}
		\innerprod{\mathcal B(\beta_1-\beta_2)}{\bar u_1-\bar u_2}_{L^2(\mathcal T)}
		&\leq
		\nnorm{\mathcal B(\beta_1-\beta_2)}_{L^2(\mathcal T)}
		\nnorm{\bar u_1-\bar u_2}_{L^2(\mathcal T)}
		\\
		&\leq
		\kappa\nnorm{\beta_1-\beta_2}_{(\R^m)^{|V_D|}}
		\nnorm{\bar u_1-\bar u_2}_{L^2(\mathcal T)}.
	\end{align*}
	Combining this with \eqref{eq:standard_estimate}, we end up with
	\[
		\nnorm{\bar u_1-\bar u_2}_{L^2(\mathcal T)}
		\leq
		(\kappa/\sigma)\nnorm{\beta_1-\beta_2}_{(\R^m)^{|V_D|}},
	\]
	which shows the desired Lipschitzness of $\Psi$.
}%
\end{proof}

\changed{%
	Let us close this subsection with some remarks.
}%

\begin{remark}\label{rem:box_constraints_at_lower_level}
\begin{enumerate}
\item
	\changed{%
	Let us mention that whenever the box constraints 
	in the optimization problem \eqref{eq:lower_level} are dropped,
	i.e., $u_\textup{a}\equiv-\infty$ and $u_\textup{b}\equiv\infty$ in \eqref{eq:Uad}, 
	then the associated solution operator $\Psi$ is linear and continuous
	(as the operator $\mathcal A$, 
	defined in the proof of \cref{prop:lower_level_solution_operator},
	is coercive and, thus, as a consequence of the Lax--Milgram lemma,
	see e.g.\ \cite[Theorem~2.29]{Dobrowolski2006}, continuously invertible).
	}%
\item 
For the theory in this paper, 
it is essential that the dynamics in \eqref{eq:lower_level} are linear 
as this, together with the structure of the objective function, guarantees that, 
for fixed $\beta\in(\R^m)^{|V_D|}$, \eqref{eq:lower_level} is a convex optimization problem which
can be solved to (global) optimality without any difficulty. 
\changed{%
	Recall that solving the lower-level problem globally is an intrinsic assumption
	in bilevel optimization.
}%
In case where \eqref{eq:network_flow} is replaced by a nonlinear
hyperbolic conservation law, it first has to be studied whether \eqref{eq:lower_level} actually
possesses a global solution, 
\changed{%
	which can be done in line with the findings in \cite{Keimer2014}.
}%
The far more challenging task is to find a procedure which reliably computes the global minimizer
of \eqref{eq:lower_level}, which is a nontrivial problem in the presence of nonlinear dynamics as
additional local minimizers and stationary points, which are not even local minimizers, may exist. 
\end{enumerate}
\end{remark}

\subsection{The upper-level problem}\label{sec:upper_level_problem}

To motivate the reconstruction problem,
we first consider the optimal control problem
\begin{equation}\label{eq:forward_transportation_problem}
	\min\limits_{z,u}\{g(z,u)\,|\,z=S(u),\,u\in \changed{U_\textup{ad}}\}
\end{equation}
where $g\colon \changed{C(\overline\Omega,L^2(\mathcal T,\R^n))}\times L^2(\mathcal T)\to\R$ 
is a classical tracking-type function given by
\[
	g(z,u)
	:=
	\frac12\sum\limits_{(i)\in E_D}
		\nnorm{
			\changed{z^{(i)}(\cdot,\omega)}-D^{(i)}_\textup{d}
			}_{\changed{{L^2(\mathcal T)}}}^2
	+
	\frac{\sigma}{2}\norm{u}^2_{{L^2(\mathcal T)}}
\]
for each $z\in \changed{C(\overline\Omega,L^2(\mathcal T,\R^n))}$ and $u\in L^2(\mathcal T)$,
\changed{%
	and $U_\textup{ad}$ is the set of feasible controls defined in \eqref{eq:Uad}.
}%
In \eqref{eq:forward_transportation_problem}, 
$S$ denotes the solution operator associated with the hyperbolic conservation law
\eqref{eq:network_flow}, see \cref{def:solution_operator_conservation_law}.
Furthermore, $D^{(i)}_\textup{d}\in \changed{L^2(\mathcal T)}$, $(i)\in E_D$, is some desired
demand \changed{at vertex $v_i$} which shall be approximated by the associated components
of the network flow resulting from a suitable choice of the control function $u$.

We assume that, e.g., by numerical experiments, $p\in\N$ (approximate) pairs of solutions 
$(z_{\textup{o},r},u_{\textup{o},r})\in \changed{C(\overline\Omega,L^2(\mathcal T,\R^n))}\times L^2(\mathcal T)$,
$r=1,\ldots,p$, of \eqref{eq:forward_transportation_problem} have been obtained
for noisy desired demands, or that the measurement itself has been inexact so that
$(z_{\textup{o},r},u_{\textup{o},r})$ on their own are noisy.
Let us emphasize that $z_{\textup{o},r}$ is some (noisy) state corresponding to $u_{\textup{o},r}$ for
each $r=1,\ldots,p$, i.e., $z_{\textup{o},r}\approx S(u_{\textup{o},r})$.
Our goal is to reconstruct 
the functions $D^{(i)}_\textup{d}$, $(i)\in E_D$, from these observations.
In order to do so, we presume that, for suitably chosen $\beta^{(i)}\in \Lambda^m$, where
$\Lambda^m$ has been defined in \eqref{eq:unit_simplex}, we can ensure
\[
	D^{(i)}_\textup{d}
	\approx
	\sum_{\ell=1}^m\beta^{(i)}_\ell D^{(i)}_\ell
\]
for all $(i)\in E_D$, where $D^{(i)}_1,\ldots,D^{(i)}_m$ are the available 
prototypical demand profiles we already mentioned in \cref{sec:lower_level_solution}.
\changed{%
	Hence, \eqref{eq:lower_level} can be interpreted as a special
	instance of \eqref{eq:forward_transportation_problem}.
}%

In this regard, the reconstruction task can be modeled via the bilevel optimization
problem
\begin{equation}\label{eq:upper_level_problem}\tag{UL}
	\min\limits_{\beta,u}\left\{
		\frac12\sum_{r=1}^p\left(
			\norm{\mathcal C(S(u)-z_{\textup{o},r})}_{H_z}^2
			+
			\norm{\mathcal D(u-u_{\textup{o},r})}_{H_u}^2
			\right)
		\,\middle|\,
			\begin{aligned}
				&\beta\in(\Lambda^m)^{|V_D|}\\
				&u=\Psi(\beta)
			\end{aligned}
	\right\}.
\end{equation}
Here, some linear, continuous mapping 
$\mathcal C\colon \changed{C(\overline\Omega,L^2(\mathcal T,\R^n))}\to H_z$ 
plays the role of an observation operator that maps the difference of the network flows 
and the observed flows to some observation space $H_z$, which is assumed to be Hilbert.
Similarly, the linear, continuous mapping $\mathcal D\colon L^2(\mathcal T)\to H_u$ 
plays the role of an observation operator
addressing the inflow, and the observation space $H_u$ is a Hilbert space.
Let us also recall that $\Psi$ denotes the solution operator associated with the
lower-level problem \eqref{eq:lower_level}, see \cref{def:lower_level_solution_operator}.
As already pointed out  in \cref{sec:lower_level_solution}, it might also be reasonable to
remove the constraint $\beta\in(\Lambda^m)^{|V_D|}$ from \eqref{eq:upper_level_problem},
resulting in a simpler problem.

We would like to mention a prototypical choice for the operator $\mathcal C$ here.
It seems to be a reasonable idea to inspect the behavior of the flow along certain parts of (selected) edges pointing
towards demand vertices from $V_D$. Thus, for some nonempty set $E'\subset E_D$ and 
\changed{%
	(potentially trivial) closed intervals
	$I^{(i)}\subset\overline\Omega$, $(i)\in E'$, one could exploit
	$\mathcal C\colon C(\overline\Omega,L^2(\mathcal T,\R^n))\to\prod_{(i)\in E'}C(I^{(i)},L^2(\mathcal T))$
given by
\[
	\forall z\in C(\overline\Omega,L^2(\mathcal T,\R^n))\colon\quad
	\mathcal C(z)
	:=
	\bigl(z^{(i)}(\cdot,\cdot)|_{\mathcal T\times I^{(i)}}\bigr)_{(i)\in E'}.
\]
Particularly, one can choose $I^{(i)}:=\{\omega\}$ for each $(i)\in E'$ in order to compare
network flows and observed flows merely by means of the outflow at certain demand vertices.
}%
Note that the operator $\mathcal C$ might be also restricted in time, 
see \cref{sec:numerical_treatment} 
and our numerical experiments in \cref{sec: simulations7}.
The operator $\mathcal D$ may be chosen similarly, 
restricting the observation of the inflow in time,
\changed{%
	including the trivial case where $\mathcal D$ is the identity.
}%
Whenever $\mathcal D$ is chosen to be the zero operator, the inflow is not observed.

Let us demonstrate that the reconstruction problem \eqref{eq:upper_level_problem} possesses an
optimal solution.
\begin{proposition}\label{thm:existence}
	The optimization problem \eqref{eq:upper_level_problem} possesses a
	\changed{globally} optimal solution.
\end{proposition}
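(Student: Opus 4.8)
The plan is to exploit the state-reduced structure of \eqref{eq:upper_level_problem} together with compactness of the feasible set for $\beta$. Since every feasible point satisfies $u=\Psi(\beta)$ with the continuous linear operator $\Psi$ from \cref{def:lower_level_solution_operator}, the bilevel problem is equivalent to minimizing the reduced objective
\[
	J(\beta)
	:=
	\frac12\sum_{r=1}^p\left(
		\norm{\mathcal C(S(\Psi(\beta))-z_{\textup{o},r})}_{H}^2
		+
		\norm{\Psi(\beta)-u_{\textup{o},r}}_{L^2(\mathcal T)}^2
	\right)
\]
over the set $(\Lambda^m)^{|E_D|}$, in the sense that $\bar\beta$ minimizes $J$ over $(\Lambda^m)^{|E_D|}$ if and only if $(\bar\beta,\Psi(\bar\beta))$ is an optimal solution of \eqref{eq:upper_level_problem}.

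First I would observe that the feasible set $(\Lambda^m)^{|E_D|}$ is a nonempty, compact subset of the finite-dimensional space $(\R^m)^{|E_D|}$: indeed, $\Lambda^m$ defined in \eqref{eq:unit_simplex} is closed and bounded, hence compact, and finite Cartesian products of compact sets are compact. In particular, the problem is feasible.

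Second I would argue that $J$ is continuous on $(\R^m)^{|E_D|}$. This is a routine composition argument: $\Psi$ is continuous and linear by \cref{def:lower_level_solution_operator}, $S$ is continuous and linear by \cref{def:solution_operator_conservation_law}, $\mathcal C$ is continuous and linear by assumption, the affine shifts by $z_{\textup{o},r}$ and $u_{\textup{o},r}$ are continuous, the Hilbert-space norms $\norm{\cdot}_H$ and $\norm{\cdot}_{L^2(\mathcal T)}$ are continuous, and finite sums and squares of continuous real-valued maps are continuous; thus $J$ is in fact a (convex) quadratic function of $\beta$. Applying the Weierstrass extreme value theorem, the continuous function $J$ attains its infimum over the nonempty compact set $(\Lambda^m)^{|E_D|}$, which yields the claim by the equivalence stated above.

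I do not expect a genuine obstacle here; the only nontrivial ingredient is that the reduction to a \emph{finite-dimensional} minimization over a \emph{compact} set is legitimate, and this rests precisely on the earlier structural results of \cref{sec:lower_level_solution} guaranteeing that $\Psi$ is a well-defined continuous linear operator. I would also remark that, were box constraints on $u$ present (cf.\ \cref{rem:box_constraints_at_lower_level}), $\Psi$ would only be locally Lipschitz continuous, but the argument above would remain valid since mere continuity of $J$ suffices.
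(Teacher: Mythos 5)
Your argument is correct and follows essentially the same route as the paper: substitute $u=\Psi(\beta)$ to obtain a reduced finite-dimensional problem over the nonempty compact set $(\Lambda^m)^{|E_D|}$, note the reduced objective is continuous as a composition of continuous (linear) maps, and invoke the Weierstra\ss{} theorem. Your additional remarks on the quadratic/convex structure and on the box-constrained case merely elaborate on observations the paper makes separately after the proof.
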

\begin{proof}
	We note that \eqref{eq:upper_level_problem} can be transferred into a
	finite-dimensional optimization problem by plugging the lower-level
	solution operator $\Psi$ into the objective function.
	\changed{%
		It is obvious that a point $\beta\in(\R^m)^{|V_D|}$
		is a global minimizer of the resulting control-reduced problem
		if and only if $(\beta,\Psi(\beta))$ is a global minimizer
		of \eqref{eq:upper_level_problem}.
	}%
	By continuity of $\Psi$, see \cref{prop:lower_level_solution_operator}, 
	and continuity of $\mathcal C$ as well as $\mathcal D$, 
	the objective function of the reduced
	problem is then continuous, while its feasible set $(\Lambda^m)^{|V_D|}$
	is nonempty and compact. Thus, the reduced problem possesses a global
	minimizer $\bar\beta\in(\R^m)^{|V_D|}$ by the Weierstraß theorem, and this yields that
	$(\bar\beta,\Psi(\bar\beta))$ solves \eqref{eq:upper_level_problem}
	to global optimality.
\end{proof}

\changed{%
	Although being globally Lipschitz continuous, see \cref{prop:lower_level_solution_operator},
	the lower-level solution operator $\Psi$ is likely to be nonsmooth apart from the
	special situation where no control constraints are present,
	see \cref{rem:box_constraints_at_lower_level}.
	Eliminating the control variable $u$ in \eqref{eq:upper_level_problem} by
	plugging $\Psi$ into the objective function, thus, leads to a finite-dimensional
	but nonconvex, nonsmooth optimization problem with polyhedral constraints.
	Whenever $U_\textup{ad}=L^2(\mathcal T)$ holds, $\Psi$ is linear,
	see \cref{rem:box_constraints_at_lower_level} again, and
}%
\eqref{eq:upper_level_problem} is actually a convex optimization problem. 
In this particular situation, numerical methods
which identify stationary points of \eqref{eq:upper_level_problem} may already
compute global minimizers of the problem. This is a rare property in
hierarchical optimization where the multilevel structure is, typically, a
source of nonconvexity and nonsmoothness, 
\changed{%
	and this problem we also face in the general setting where
	control constraints are present.
	}%

\section{\changed{Numerical solution and computational results}}\label{sec:experiments}

\changed{%
	In this section, we first describe how \eqref{eq:upper_level_problem} can be solved
	in numerical practice. Second, results of some computational experiments are presented.
}%

\subsection{\changed{Numerical solution of the problem}}\label{sec:numerical_treatment}

For the network discretization, 
\changed{%
we choose a time grid $(t_j)_{j=1}^{J}$   
of $J\in\N$ discretization points such that $t_j:= (j-1)\Delta t$ for all $j\in\{1,\ldots,J\}$,
where $\Delta t>0$ is a given temporal stepsize, 
and a spatial discretization of each edge $(i)$, 
represented by the interval $(0,\omega)$, as $(x^{(i)}_q)_{q=1}^{L^{(i)}}$,
where $L^{(i)}\in\N$ is the number of discretization points,
$x^{(i)}_q := (q-1)\Delta x^{(i)}$ for all $q\in\{1,\ldots,L^{(i)}\}$, 
and $\Delta x^{(i)}>0$ is the spatial stepsize for edge $(i)$.
}%
The transported quantities $z^{(i)}_{j,q}$ at time $t_j$ and position \changed{$x_q^{(i)}$}
given by the PDE in \eqref{eq:network_flow_PDE} are calculated using a left-sided 
upwind scheme, i.e.,
\begin{align}
	\label{eq:upwindScheme}
	z^{(i)}_{j,q} 
	= 
	z_{j-1,q}^{(i)} - \frac{\Delta t}{\Delta x^{(i)}} \lambda^{(i)} 
		\left(z_{j-1,q}^{(i)} - z_{j-1,q-1}^{(i)} \right),
	\quad j\in\{2,\ldots,J\},\,q\in\{2,\ldots,L^{(i)}\}.
\end{align}
We also note that \eqref{eq:network_flow_initials} translates into $z_{1,q}^{(i)}=0$ for
all $q\in\{1,\ldots,L^{(i)}\}$.
At the junctions, according to \eqref{eq:network_flow_inflow} 
and \eqref{eq:network_flow_conservation}, we require 
\begin{align}
	\label{eq:upwindJunctions}
	z^{(1)}_{j,1} 
	= 
	\frac{u_j}{\lambda^{(1)}},
	\qquad
	z^{(k)}_{j,1} 
	= 
	\alpha_{i,k}\frac{\lambda^{(i)}}{\lambda^{(k)}}z^{(i)}_{j,L^{(i)}},\quad
	\changed{v_i\in V_I,\,(k)} \in E^+(i),\,j\in\{1,\ldots,J\}
\end{align}
where $u_j:=u(t_j)$ for all $j\in\{1,\ldots,J\}$.
For $\frac{\Delta t}{\Delta x^{(i)}} \lambda^{(i)} = 1$, the upwind scheme shows no diffusion.
Therefore, we set $\Delta x^{(i)} := \lambda^{(i)}\Delta t$ 
which leads to different spatial grids on the different edges
whenever the respective coefficients $\lambda^{(i)}$ are not the same.

We use this discretization for a finite differences approximation of the lower-level problem 
\eqref{eq:lower_level}.
We define $S^{(i),{L^{(i)}}}\in\R^{J\times J}$
\changed{%
	to be the (discrete) realization of $S^{(i)}_\omega$ 
}%
such that $\sum_{\nu=1}^JS^{(i),{L^{(i)}}}_{j,\nu}u_\nu$ approximates the influence of the discretized  inflow
on the density $z^{(i)}_{j,{L^{(i)}}}$ at time $t_j$ and spatial point $\omega$. 
\changed{%
Further, we denote the discrete versions of the demand profiles 
$D_1^{(i)},\ldots,D_m^{(i)}$ for edge $(i)\in E_D$ by 
$\tilde D^{(i)}_1,\ldots,\tilde D^{(i)}_m\in\R^J$.
}%

\changed{%
	For our computations, we will exploit that the columns of $S^{(i),L^{(i)}}$ are orthogonal to each other. This is the case since, due to the special structure of the PDEs, 
	there is a one-to-one correspondence between the inflow into the system and the outflow out of the system. 
	Therefore, in the discretized setting, there is a unique time point for the inflow
	that determines the outflow at the corresponding outflow time. 
	This property enforces the matrix $S^{(i),L^{(i)}}$ to be nonzero on its subdiagonal. 
	Consequently, $S^{(i),L^{(i)}}$ is orthogonal.
}%

For a given convex combination of base demands by the vector $\beta$ 
and using \eqref{eq:derivative_of_f}, we obtain the optimal inflow 
in the discretized setting 
\changed{in the absence of control constraints} 
when solving the linear system $Au - B\beta = 0$ where $A$ is given by 
\[
	A
	:=
	\sum_{(i)\in E_D}\left(S^{(i),{L^{(i)}}}\right)^\top\left(S^{(i),L^{(i)}}\right)
	+
	\sigma\mathbb I_J,
\]
where $\mathbb I_J\in\R^{J\times J}$ is the identity matrix,
and
\[
	B
	:=
	\begin{bmatrix}
		Q^{(i)}
	\end{bmatrix}_{(i)\in E_D}.
\]
Above, for each $(i)\in E_D$, $Q^{(i)}\in\R^{J\times m}$ is given by
\[
	Q^{(i)}
	:=
	\begin{bmatrix}
		\left(S^{(i),{L^{(i)}}}\right)^\top\tilde D^{(i)}_1 
		& \ldots & 
		\left(S^{(i),L^{(i)}}\right)^\top \tilde D^{(i)}_m
	\end{bmatrix}.
\]
\changed{%
We note that the discretized lower-level problem is equivalent to
\[
	\min\limits_{u}\{\tfrac12 u^\top Au-(B\beta)^\top u\,|\,
		u_\textup{a}\leq u\leq u_\textup{b}\},
\]
where $u_{\textup{a},j}:=u_\textup{a}(t_j)$ and $u_{\textup{b},j}:=u_\textup{b}(t_j)$
for all $j=1,\ldots,J$.
We obtain the solution of this problem by projecting
the solution of the linear equation $Au-B\beta=0$ onto the feasible box, since $A$ is a diagonal
positive definite matrix by orthogonality of $S^{(i),L^{(i)}}$, $(i)\in E_D$, 
see \cref{sec:special_QPs} for details.
}%

For the upper-level problem \eqref{eq:upper_level_problem}, 
we apply the same discretization technique with different stepsizes, 
see \cref{sec:documentation}, and consider, 
if not specified differently, the observation operator $\mathcal{C}$ in which we only observe the densities 
\changed{%
	at the demand vertices from $V_D$, corresponding to the last discretization points of the edges in $E_D$,
	as well as at the first discretization point of edge $(1)$, monitoring the inflow at $v_0$.
	Additionally, $\mathcal D$ is the zero operator in our experiments.
	}%
Further details and some numerical examples are explained in \cref{sec:documentation} 
where it is also described how $\mathcal{C}$ and $\mathcal D$ can be adjusted. 

\changed{%
Inserting the discretized solution operator of the lower-level problem
into the objective function of the discretized upper-level problem results
in a nonsmooth optimization problem with affine constraints, 
and we solve the latter using MATLAB's \texttt{patternsearch} solver in default mode. 
We want to emphasize that the performance of this optimization routine heavily depends 
on the initial point that is handed over to the solver.
This, however, is not surprising as the considered nonsmooth problem of interest is nonconvex
and, thus, likely to possess several local minimizers and stationary points which are
different from its global minimizers.
As the model is designed to reconstruct certain reference parameters from noisy data,
we initialize \texttt{patternsearch} with a perturbed version of these reference parameters
to face this problem.
We note that, in the absence of lower-level control constraints,
the resulting single-level problem is a simple convex quadratic problem which can
be solved, exemplary, with the aid of MATLAB's \texttt{quadprog} routine,
and the aforementioned issues do not occur.
}%

\subsection{\changed{General set-up of experiments}}\label{sec:general_set_up}

We consider the tree-shaped network presented in \cref{fig:example_network_large} in which each edge has a length of $\omega=1$. 
\begin{figure}[ht]
	\begin{center}
		\begin{tikzpicture}[-,>=stealth',shorten >=1pt, semithick]
			
			\node[bnode,label=left:$v_0$] at (-2,0) (0){};
			\node[bnode,label=above:$v_1$] at (0,0) (1){};
			\node[bnode,label=above:$v_2$] at (2,1) (2){};
			\node[bnode,label=below:$v_3$] at (2,-1) (3){};
			\node[bnode,label=above:$v_4$] at (4,2) (4){};
			\node[bnode,label=above:$v_5$] at (4,0) (5){};
			\node[bnode,label=below:$v_6$] at (4,-2) (6){};
			\node[bnode,label=right:$v_7$] at (6,2) (7){};
			\node[bnode,label=right:$v_8$] at (6,1) (8){};
			\node[bnode,label=right:$v_9$] at (6,0) (9){};
			\node[bnode,label=right:$v_{10}$] at (6,-1) (10){};
			\node[bnode,label=right:$v_{11}$] at (6,-2) (11){};
			
			\path[->] (0) edge node[above]{$(1)$} (1);
			\path[->] (1) edge node[above left]{$(2)$} (2);
			\path[->] (1) edge node[below left]{$(3)$} (3);
			\path[->] (2) edge node[above]{$(4)$} (4);
			\path[->] (2) edge node[below]{$(5)$} (5);
			\path[->] (3) edge node[below]{$(6)$} (6);
			\path[->] (4) edge node[above]{$(7)$} (7);
			\path[->] (4) edge node[below]{$(8)$} (8);
			\path[->] (5) edge node[above]{$(9)$} (9);
			\path[->] (6) edge node[above]{$(10)$} (10);
			\path[->] (6) edge node[below]{$(11)$} (11);
		\end{tikzpicture}
	\end{center}
	\caption{%
		The network considered in \cref{sec:documentation} with $V_D=\{v_7,v_8,v_9,v_{10},v_{11}\}$, 
		$V_I=\{v_1,v_2,v_3,v_4,v_5,v_6\}$, 
		and $E_D=\{(7),(8),(9),(10),(11)\}$.
		}%
	\label{fig:example_network_large}
\end{figure}
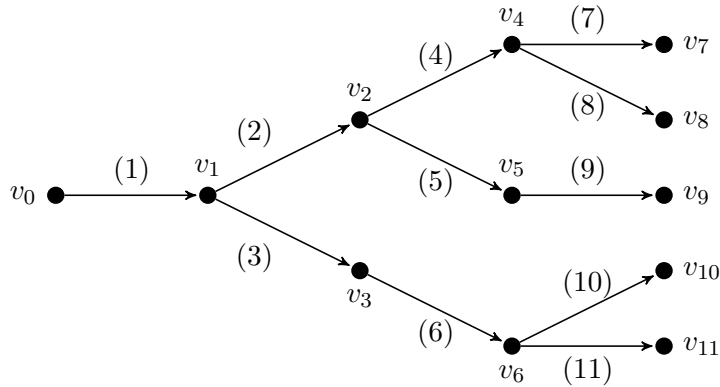
The velocities are chosen identically for all edges, we use $\lambda^{(i)} = 10$, $i=1,\dots,11$. 
The stepsizes are given by  $\Delta t = \frac{1}{60}$, $\Delta x = \frac{1}{6}$ for the backward calculation 
and \changed{$\Delta t = \frac{1}{70}$, $\Delta x = \frac{1}{7}$} for the forward calculation, which are chosen differently to avoid an inverse crime, 
see \cite[page~154]{ColtonKress2013}, 
\changed{for the unconstrained examples, 
and $\Delta t = \frac{1}{20}$, $\Delta x = \frac{1}{2}$ (backward calculation), $\Delta t = \frac{1}{30}$, $\Delta x = \frac{1}{3}$ (forward calculation) 
when applying constraints to the inflow in order keep reasonable running times.} Note that $\Delta x^{(i)}=\Delta x$ is exploited, $i=1,\ldots,11$.
In both cases, the Courant--Friedrichs--Lewy condition holds true with equality, i.e., $\frac{\Delta t}{\Delta x}\lambda^{(i)} = 1$, $i=1,\dots,11$,
to avoid diffusion in the numerical scheme. 
The distribution parameters are set to 
\[
	\begin{aligned}
		\alpha_{1,2}&=0.65,&\quad
		\alpha_{2,4}&=0.7,&\quad
		\alpha_{4,7}&=0.5,&\quad
		\alpha_{6,10} &= 0.4,&
		\\		
		\alpha_{1,3}&=0.35,& 
		\alpha_{2,5}&=0.3,&
		\alpha_{4,8}&=0.5,&
		\alpha_{6,11} &= 0.6.&
	\end{aligned}
\]
We consider the evolution of the demand within one week, i.e., $T=168$
where one time unit represents one hour and assume four underlying base demand levels which are visualized in \cref{fig: baseDemands}
and chosen as
\begin{itemize}
	\item a time constant level of the demand: \\
		$D_1(t) = 4$,
	\item a daily varying level at which we attain the highest level in the morning:\\
		$D_2(t) = 2 + \sin\left(\pi(t-2)/12\right)$,
	\item a daily varying level at which we attain the highest level in the afternoon:\\ 
		$D_3(t) = 2 + \sin\left(\pi(t-10)/12\right)$,
	\item a level that illustrates the lower demand during the weekend:\\
		$D_4(t) = \mathbbm{1}_{[0,120]}(t)$.
\end{itemize}
\begin{figure}
	\centering
	\includegraphics[scale=0.6]{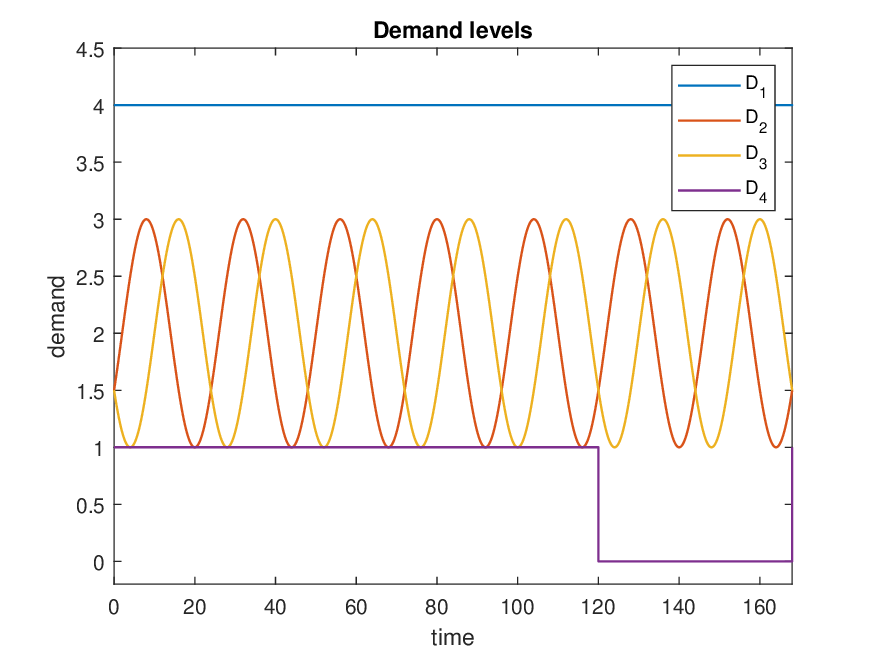}
	\caption{Illustration of the four base demand levels.}
	\label{fig: baseDemands}
\end{figure}
These choices can similarly be found for example for the electricity market in \cite{CoskunKorn2021} 
and describe the identified two-peak pattern of demand in the intraday market ($D_2, D_3$) 
as well as the phenomenon referred to as the \textit{weekend effect} ($D_4$).
For the prototypical demand profiles, we make use of
$D^{(i)}_\ell:=\hat d^{(i)}\,D_\ell$, $i\in\{7,\ldots,11\}$, $\ell\in\{1,2,3,4\}$,
where
\[
	\hat{d}^{(7)}=0.2275,
	\quad
	\hat{d}^{(8)}=0.2275,
	\quad
	\hat{d}^{(9)}=0.195,
	\quad
	\hat{d}^{(10)}=0.14,
	\quad
	\hat{d}^{(11)}=0.21.
\]
This choice proportionally accounts for the different distribution parameters in the network.
The historical observations are basically generated using the initial weights 
\begin{align}
	\label{initialBeta}
	(\beta_1, \beta_2, \beta_3,\beta_4) = (0.2,0.15,0.2,0.45).
\end{align}
In every time step and for every demand vertex, the base demand levels are perturbed by random variables 
\[
	Z_1^{(i)}\sim \mathcal{N}(0,1),\quad
	Z_2^{(i)}\sim \mathcal{N}(0,1/4),\quad
	Z_3^{(i)}\sim \mathcal{N}(0,1/4),\quad
	Z_4^{(i)}\sim \mathcal{N}(0,1/4),
\] 
such that the historically desired demands are given by realizations of 
\begin{equation}\label{eq:creating_historically_desired_demand}
	D^{(i)}_{\textup{d}} = \sum_{\ell = 1}^4 \beta_\ell\, \hat{d}^{(i)} \left(D_\ell + Z^{(i)}_\ell\right),
	\quad
	i\in\{7,\ldots,11\}.
\end{equation}
The historically observed pairs $(z_\textup{o},u_\textup{o})$ are computed as solutions of 
the associated problem \eqref{eq:forward_transportation_problem}.

\subsection{\changed{Documentation of experiments}}
\label{sec:documentation}

In the following, we investigate different variants of the bilevel optimization problem \eqref{eq:upper_level_problem}. 
The standard version is presented in \cref{sec: simulations1},
and settings with additional perturbations in the historical observations are shown in \cref{sec: simulations2,sec: simulations4}.
A time-restricted observation operator $\mathcal{C}$ is investigated in \cref{sec: simulations7}. All subsections are constructed in a similar way. 
First, we present exemplary historical demand observations, 
then we provide a comparison of the in- and outflows for the means of the historical observations 
and the initially chosen $\beta$ as well as for the reconstructed $\beta$ 
\changed{in a framework without an inflow constraint, which can also be considered as a framework with a high constraint that does not really affect the inflow.}
These illustrations are presented for the inflow vertex and the demand vertex $v_7$
(the behavior at all other demand vertices is similar). 
We can verify that, on the one hand, the optimal inflows are calculated correctly and,
on the other hand, see whether the reconstruction of the weights $\beta$ was successful. 
The second aspect is further underlined by a table presenting the means and variances for $\beta$ of a Monte Carlo simulation 
of $N=40$ runs for different numbers of historical observations $p$. 
\changed{%
	Second, we repeat the investigations of each subcase 
	based on a medium inflow constraint 
	$u_\textup{b}\equiv 2$ and a low inflow constraint $u_\textup{b} \equiv 1.5$, where we also ensure nonnegative inflows, 
	i.e., $u_\textup{a} \equiv 0$,
	the latter being nonrestrictive as the desired demand at the vertices in $V_D$
	is nonnegative.
	}%

\subsubsection{Standard model without additional adjustments}
\label{sec: simulations1}

In this scenario, no further perturbations or model changes are included,
and we consider the framework presented in the previous sections.
Three examples for historical observations are given in \cref{fig: c1_histData} 
which show the sinusoidal behavior of demand, as well as the drop for $t>120$ during the weekend. 
Furthermore, we detect the stochastic noise in the demands, 
however, still verify that the demands show a very similar structure. 
The comparison of the inflow and outflow for demand vertex $v_7$ are presented in \cref{fig: c1_inflow_outflow}, 
where the blue curve shows the mean values of the $p=6$ historical observations, 
the yellow dotted line represents the curve for the \textit{true} $\beta$ given in \eqref{initialBeta}, 
and the red line the in- or outflow for the reconstructed $\beta$. All considerations were made without constraining the inflow control.
It can be concluded that all three curves match very well, 
which means that, on the one hand, the inflow is calculated appropriately and, on the other hand, 
also the weights of the base demands are reobtained very well.
The outflow behavior at the demand vertices $v_8,\dots,v_{11}$ shows similar patterns and is (for brevity of presentation) not illustrated.  
\changed{%
	At the beginning and the end of the considered time horizon, 
	some curves in \cref{fig: c1_inflow_outflow} decay to zero or show a jump. 
	This can be explained by the fact that around time $t=0$, 
	it takes some time until (starting from an empty system) 
	the first inserted quantity reaches the demand vertex. 
	Therefore, the outflows are zero in the very beginning of the time period. 
	Conversely, the inflow for times close to $T=168$ vanishes,
	since these quantities do not reach the demand nodes within the considered time horizon. 
	The increase at $T=168$ in the outflow figure can be explained 
	by considering $T=168$ to be Monday already, where the demand is larger again.
	Similar artifacts show up in some other figures in this section due to analogous reasons.
}%

\cref{tab: c1_beta} shows the means and variances of the reconstructed weights 
for the base demands for different numbers of perturbed historical observations in a Monte Carlo simulation of $N=40$ runs 
and underlines the results from \cref{fig: c1_inflow_outflow} quantitatively. 
As it can be expected for larger numbers of historical observations, 
the means approach the values in \eqref{initialBeta} and the variances in the runs decrease in the number of historical observations $p$.

\begin{figure}[ht!]
	\centering
	\includegraphics[scale=0.5]{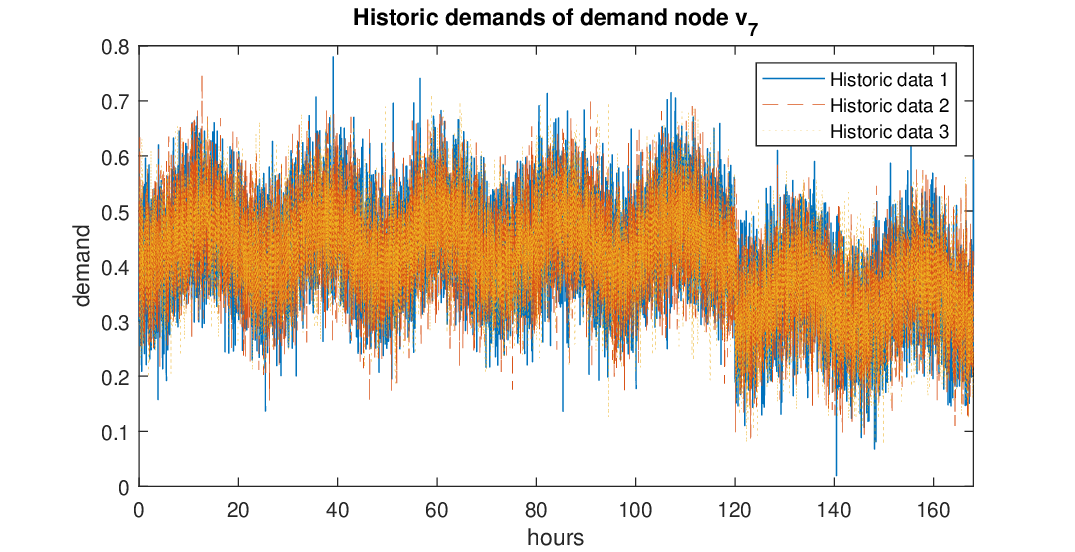}
	\caption{%
		Three of the perturbed historically observed demands for demand vertex $v_7$ in the case of \cref{sec: simulations1}.
		}%
	\label{fig: c1_histData}
\end{figure}

\begin{figure}[ht!]
	\centering
	\includegraphics[scale=0.72]{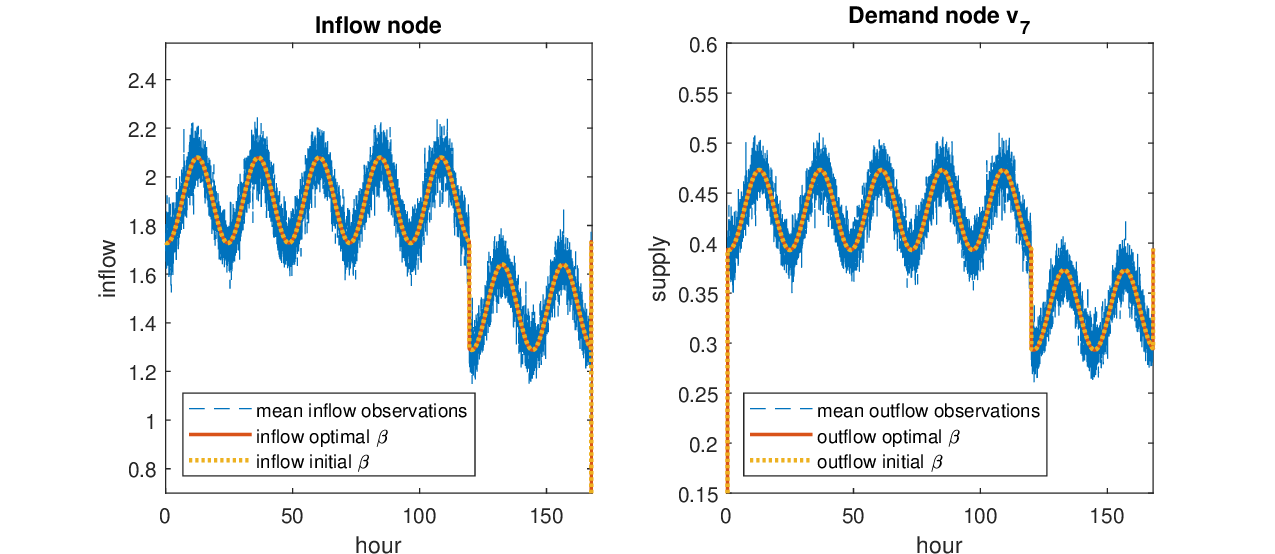}
	\caption{%
		A comparison between the mean realization of the $p=6$ historical in- and outflows with the in- and outflow 
		for the reconstructed $\beta$ in the case of \cref{sec: simulations1} and the initial $\beta$.
		}%
	\label{fig: c1_inflow_outflow}
\end{figure}

\begin{table}[ht!]
	\centering\small{
		\begin{tabular}{r||c|c|c|c||c|c|c|c}
			&\multicolumn{4}{c||}{mean} & \multicolumn{4}{c}{variance}\\
			\hline\hline
		&$p=1$& $p=6$ & $p=20$ & $p=200$&$p=1$& $p=6$ & $p=20$ & $p=200$\\
		\hline\hline
		$\beta_1$  &0.2003&0.2000&0.1999&0.2000&2.16e-06&0.25e-06&0.16e-06&0.10e-07\\
		\hline
		$\beta_2$  &0.1496&0.1499&0.1500&0.1500&6.65e-06&0.42e-06&0.29e-06&0.21e-07\\
		\hline
		$\beta_3$  &0.2001&0.2001&0.2002&0.2000&2.82e-06&0.87e-06&0.31e-06&0.18e-07\\
		\hline
		$\beta_4$  &0.4503&0.4500&0.4499&0.4500&4.16e-06&0.57e-06&0.34e-06&0.21e-07\\
	\end{tabular}}
	\caption{%
		Means and variances of the reobtained weights for the base demands for different choices of the number of perturbed historical observations $p$ 
		in the setting of \cref{sec: simulations1}.
		}%
\label{tab: c1_beta}
\end{table}

\changed{%
Accounting for a potential constraint on the inflow, 
we compare a scenario where the inflow is limited to 2 (medium constraint) 
and 1.5 (low constraint). 
We repeat the idea of \cref{fig: c1_inflow_outflow} in \cref{fig: c1_inflow_outflow_constraint}
emphasizing that, except for the constraint, all other quantities remain unchanged. 
However, the demand illustration seems to be less fluctuating which can be explained 
by the coarser discretization grid that is used for the constrained optimization. 
In the medium constraint case, we observe that the in- and outflow follow the unconstrained case
but are truncated at the very highest peaks and otherwise follow the averaged demand well.
Regarding the reconstruction of the weights of the base demand levels when zooming in, 
one can still observe a quite good match 
in the in- and outflows of the optimized and initial choices of $\beta$.
\cref{tab: c1_beta_constraint} underlines this observation, 
but shows a small deviation especially in the parameters $\beta_1$ and $\beta_2$ 
compared to the unrestricted case. 
For the low constraint, the inflow is cut from Monday to Friday 
and in some peak times also during the weekend, 
so that most of the time demand cannot be satisfied on average. 
Then the reconstruction task is also not successful, 
and we can observe a visible mismatch in the green circles 
(associated to the optimal outflow for the initial $\beta$) 
and purple diamonds (representing the outflow for the reconstructed $\beta$) 
during the weekend. 
Referring again to \cref{tab: c1_beta_constraint}, 
one can see that there is a large deviation in the reconstructed values of $\beta$, 
where the very low values of $\beta_2$ and $\beta_3$ are particularly striking. 
This effect can be explained by the fact that $D_2$ and $D_3$ are the sinusoidal components 
of demand and that the observations are smoothed and truncated at the majority of time.
}%

\begin{figure}[ht!]
	\centering
	\includegraphics[scale=0.5]{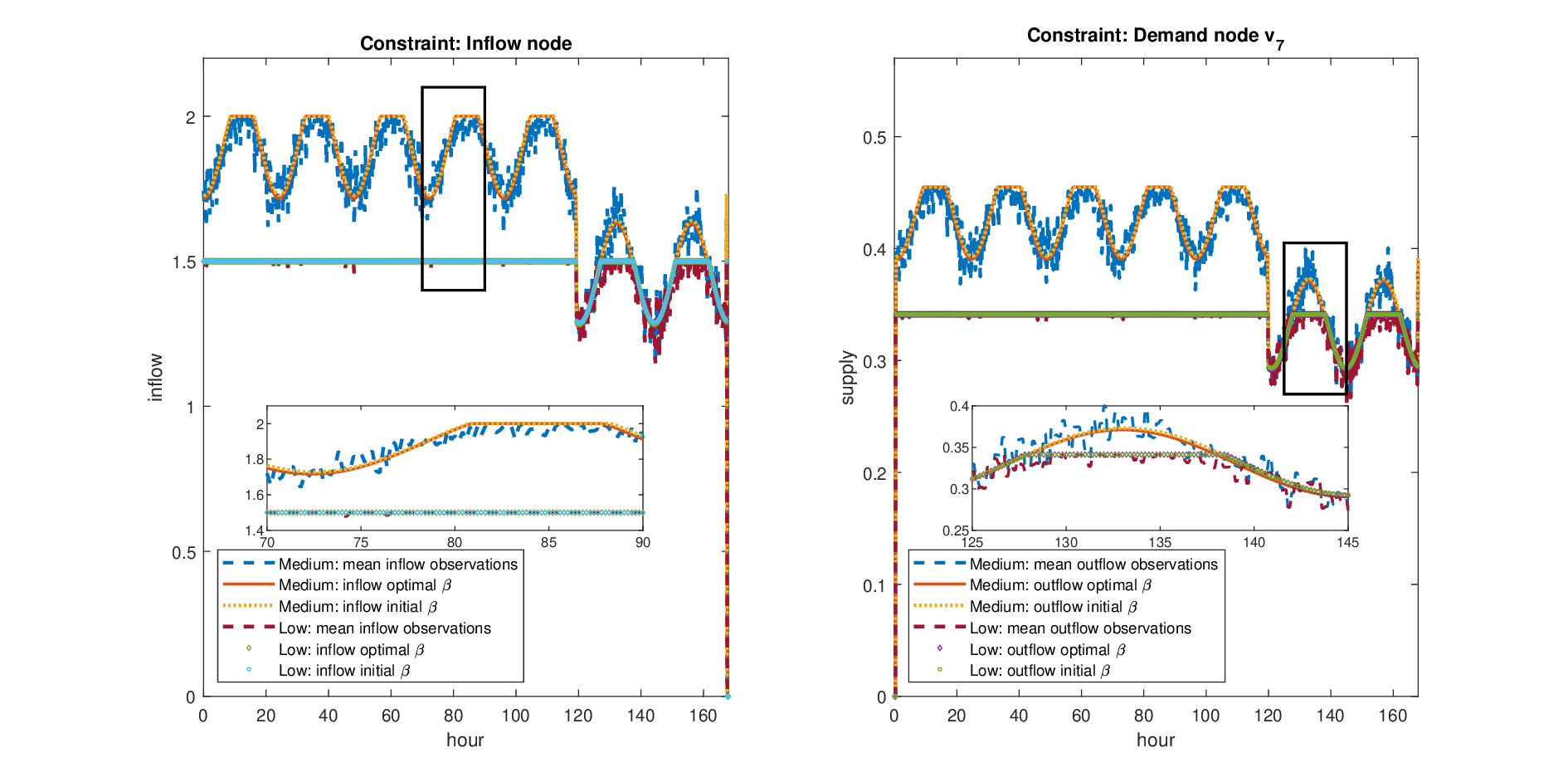}
	\caption{%
		\changed{A comparison between the mean realization of the $p=6$ historical in- and outflows with the in- and outflow 
		for the reconstructed $\beta$ in the case of \cref{sec: simulations1} and the initial $\beta$ with two different inflow constraints.}
	}%
	\label{fig: c1_inflow_outflow_constraint}
\end{figure}

\begin{table}[ht!]
	\centering\small{
		\begin{tabular}{r||c|c||c|c||c|c||c|c}
			&\multicolumn{4}{c||}{mean} & \multicolumn{4}{c}{variance}\\
			\hline\hline
			&\multicolumn{2}{c||}{medium} & \multicolumn{2}{c||}{low}&\multicolumn{2}{c||}{medium} & \multicolumn{2}{c}{low}\\
			\hline
			&$p=6$& $p=200$ & $p=6$ & $p=200$&$p=6$& $p=200$ & $p=6$ & $p=200$\\
			\hline\hline
			$\beta_1$  &0.1960&0.1972&0.2460&0.2474&2.87e-06&0.11e-06&0.83e-04&0.11e-05\\
			\hline
			$\beta_2$  &0.1549&0.1536&0.0997&0.0988&5.93e-06&0.22e-06&0.91e-04&0.10e-05\\
			\hline
			$\beta_3$  &0.1997&0.1997&0.1297&0.1284&6.77e-06&0.35e-06&1.10e-04&0.21e-05\\
			\hline
			$\beta_4$  &0.4493&0.4495&0.5245&0.5254&6.24e-06&0.30e-06&1.11e-04&0.15e-05\\
	\end{tabular}}
	\caption{%
		\changed{Means and variances of the reobtained weights for the base demands for different choices of the number of perturbed historical observations $p$ 
		in the setting of \cref{sec: simulations1} with additional inflow constraints}.
	}%
	\label{tab: c1_beta_constraint}
\end{table}

\subsubsection{Results with additional noise in the weights $\beta$}
\label{sec: simulations2}
In addition to the investigation of \cref{sec: simulations1}, 
we introduce a structural and uncertain deviation in the choice of $\beta$, when generating the historically desired 
demand in \eqref{eq:creating_historically_desired_demand}.
We assume that the uncertainty mainly comes into play for $\beta_4$ such that for any historical observation, 
the weights for the demand levels are chosen as
\begin{align}
	\label{betaperturbed}
	(\beta_1, \beta_2, \beta_3,\beta_4) 
	= 
	\left(\frac{0.2}{1+\tilde{Z}},\frac{0.15}{1+\tilde{Z}},\frac{0.2}{1+\tilde{Z}},\frac{0.45+\tilde{Z}}{1+\tilde{Z}}\right)
\end{align}
for a random variable $\tilde{Z} \sim \mathcal{U}([-0.05,0.05])$.

The results for some historical observations are presented below in \cref{fig: c2_histData}. 
There is not only noise in the demands but also structurally different behavior 
due to different realizations of $\tilde{Z}$ in the weights of the demands. 
Therefore, the yellow curve of historic data 3 seems to be lower 
(corresponding to a larger value of $\tilde{Z}$) than the blue curve (corresponding to a smaller value of $\tilde{Z}$).
\cref{fig: c2_inflow_outflow} shows the different in- and outflows which are supplemented by \cref{tab: c2_beta} 
showing the means and the variances of a Monte Carlo simulation for the reconstructed weights of the base demands 
for different numbers of perturbed historical observations. 
We observe that in \cref{fig: c2_inflow_outflow}, the expected outflow and inflow match quite well, 
but considering \cref{tab: c2_beta}, it can be seen that the reconstruction is more difficult than in the standard setting.
For small $p$, the reconstructed $\beta$ deviates more significantly from the initial choice. 
For a larger number of observations $p$, the data indicates that the performances are improved and lead to good reconstructed values of $\beta$. 

\begin{figure}[ht!]
	\centering
	\includegraphics[scale=0.5]{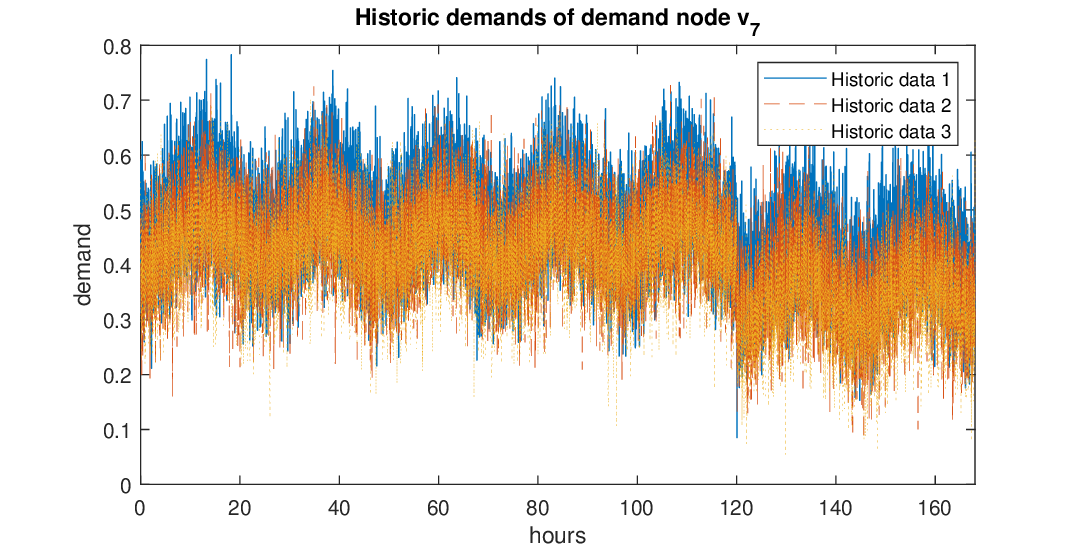}
	\caption{%
		Three of the perturbed historically observed demands for demand vertex $v_7$ in the case of \cref{sec: simulations2}.
		}%
	\label{fig: c2_histData}
\end{figure}

\begin{figure}[ht!]
	\centering
	\includegraphics[scale=0.72]{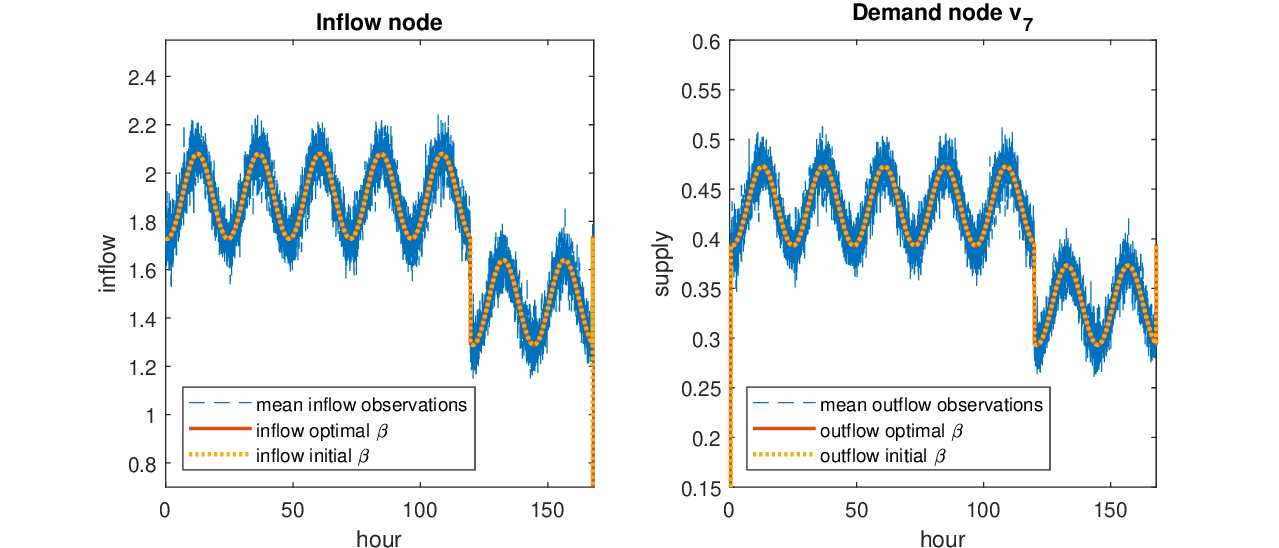}
	\caption{%
		A comparison between the mean realization of the $p=6$ historical in- and outflows 
		with the in- and outflow for the reconstructed $\beta$ in the case of \cref{sec: simulations2} and the initial $\beta$.
		}%
	\label{fig: c2_inflow_outflow}
\end{figure}

\begin{table}[ht!]
	\centering\small{
		\begin{tabular}{r||c|c|c|c||c|c|c|c}
			&\multicolumn{4}{c||}{mean} & \multicolumn{4}{c}{variance}\\
			\hline\hline
			&$p=1$& $p=6$ & $p=20$ & $p=200$&$p=1$& $p=6$ & $p=20$ & $p=200$\\
			\hline\hline
			$\beta_1$  &0.2012&0.2010&0.2001&0.2001&0.36e-04&0.08e-04&0.19e-05&0.01e-05\\
			\hline
			$\beta_2$  &0.1497&0.1509&0.1500&0.1501&0.26e-04&0.04e-04&0.14e-05&0.01e-05\\
			\hline
			$\beta_3$  &0.2013&0.2013&0.2003&0.2001&0.40e-04&0.06e-04&0.19e-05&0.01e-05\\
			\hline
			$\beta_4$  &0.4477&0.4468&0.4496&0.4497&2.76e-04&0.49e-04&1.37e-05&0.08e-05\\
			\end{tabular}}
			\caption{%
				Means and variances of the reobtained weights for the base demands 
				for different choices of the number of perturbed historical observations $p$ in the setting of \cref{sec: simulations2}.
				}%
			\label{tab: c2_beta}
		\end{table}
		
\changed{%
	Also in this scenario, we investigate a constraint on the inflow control 
	on a medium level of 2 and a low constraint of 1.5. 
	Similar to \cref{sec: simulations1}, the reconstruction works at least satisfactorily 
	in the medium constraint case, 
	whereas it fails in the low constraint case. 
	Nevertheless, in both cases, the average outflow matches the optimal outflow 
	for the reconstructed $\beta$, see \cref{fig: c2_inflow_outflow_constraint}.
	\cref{tab: c2_beta_constraint} shows for $p\in\{6,200\}$ the mean and the variance 
	as the adapted version of \cref{tab: c2_beta} with medium and low inflow constraint, 
	where the variances are similar but slightly higher than in the unconstrained framework.
	The observed effects are comparable to those obtained for the constrained but unperturbed regime in \cref{tab: c1_beta_constraint}.
	 }%
		\begin{figure}[ht!]
			\centering
			\includegraphics[scale=0.5]{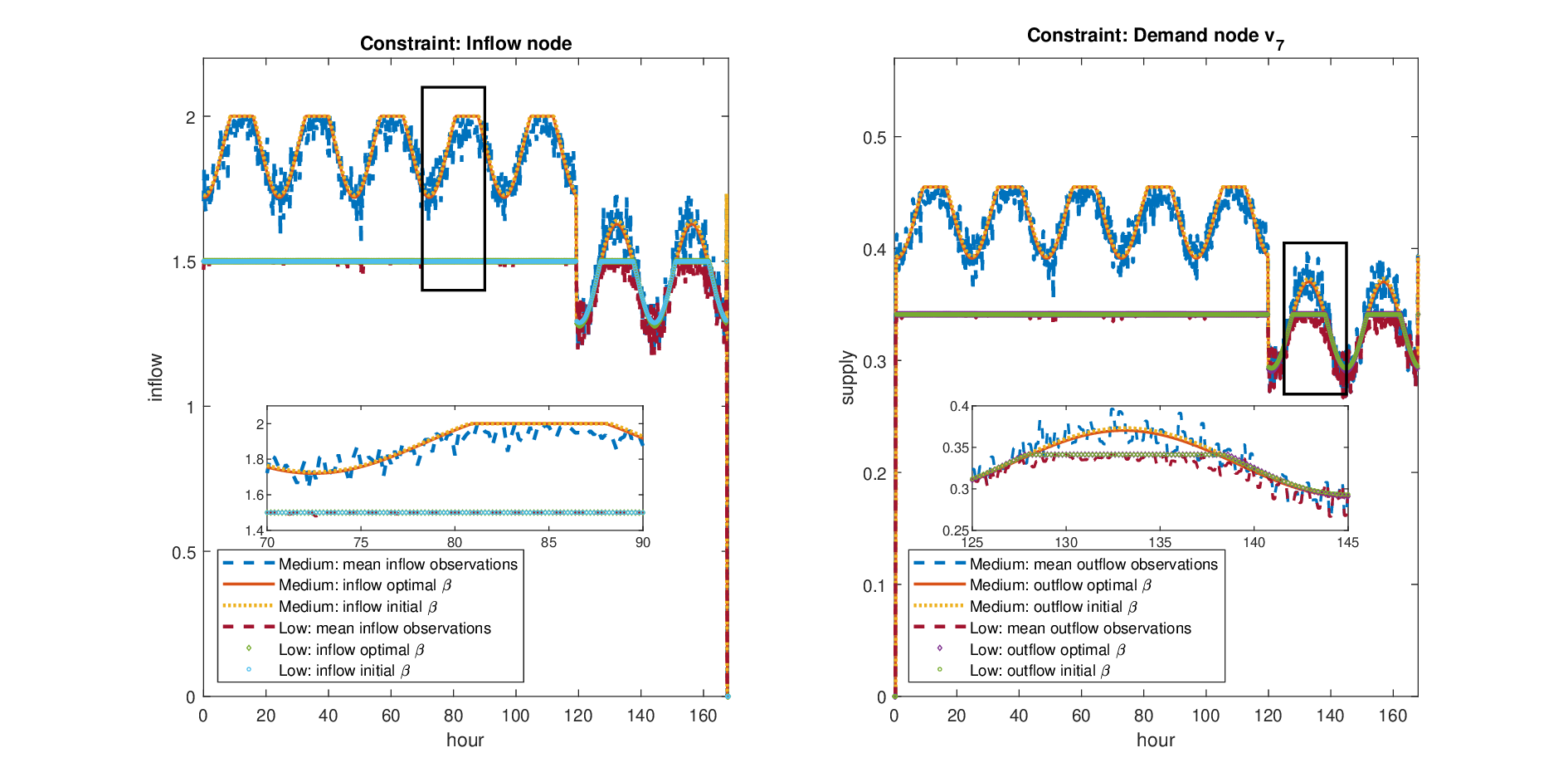}
			\caption{%
				\changed{A comparison between the mean realization of the $p=6$ historical in- and outflows 
					with the in- and outflow for the reconstructed $\beta$ in the case of \cref{sec: simulations2} and the initial $\beta$ with two different inflow constraints.}
			}%
			\label{fig: c2_inflow_outflow_constraint}
		\end{figure}

		\begin{table}[ht!]
			\centering\small{
				\begin{tabular}{r||c|c||c|c||c|c||c|c}
					&\multicolumn{4}{c||}{mean} & \multicolumn{4}{c}{variance}\\
					\hline\hline
					&\multicolumn{2}{c||}{medium} & \multicolumn{2}{c||}{low}&\multicolumn{2}{c||}{medium} & \multicolumn{2}{c}{low}\\
					\hline
					&$p=6$& $p=200$ & $p=6$ & $p=200$&$p=6$& $p=200$ & $p=6$ & $p=200$\\
					\hline\hline
					$\beta_1$  &0.1958&0.1965&0.2494&0.2518&0.06e-04&0.02e-05&1.01e-04&0.18e-05\\
					\hline
					$\beta_2$  &0.1548&0.1539&0.0923&0.0909&0.10e-04&0.06e-05&0.85e-04&0.10e-05\\
					\hline
					$\beta_3$  &0.1996&0.1997&0.1285&0.1262&0.10e-04&0.09e-05&1.21e-04&0.28e-05\\
					\hline
					$\beta_4$  &0.4498&0.4499&0.5297&0.5310&0.36e-04&0.27e-05&1.09e-04&0.24e-05\\
			\end{tabular}}
			\caption{%
				\changed{Means and variances of the reobtained weights for the base demands 
					for different choices of the number of perturbed historical observations $p$ in the setting of \cref{sec: simulations2} with additional inflow constraints}.
			}%
			\label{tab: c2_beta_constraint}
		\end{table}

\subsubsection{Results with changed base demand level $D_4$}
\label{sec: simulations4}

This section is based on the investigations in \cref{sec: simulations1}. 
Instead of perturbing $\beta$, we assume that there is a structural deviation in the base demand levels. 
Particularly, we assume that in the generation of the observations, 
we adjust the base demand $D_4$ to $D_4(t) = \frac{3}{2}\mathbbm{1}_{[0,120]}(t)$, which means that there is larger share of demand on weekdays.
Furthermore, we omit the normalization restriction to the weights, i.e., we merely assume $\beta_\ell\geq 0$, $\ell\in\{1,\ldots,4\}$,
and drop the constraint $\sum_{\ell = 1}^4 \beta_\ell = 1$, since the increase in the base demand level should now be captured by a larger weight on $\beta_4$.
Note that we still use $D_4(t)=\mathbbm{1}_{[0,120]}(t)$ in the lower-level objective function from \eqref{eq:lower_level_objective}
for the reconstruction task.
Similar to \cref{sec: simulations1}, examples of the historical observations are presented in \cref{fig: c4_histData} 
which now show a larger difference between the weekday demand and the weekend demand induced by the larger value in $D_4$. 
Again, \cref{fig: c4_inflow_outflow} shows the in- and outflows for the means of the observations (blue), 
the initial $\beta$ from \eqref{initialBeta} (yellow dotted), and the reconstructed $\beta$ (red). 
One can see that the in- and outflow of the initial $\beta$ do not match from Monday to Friday, 
but do on the weekend, since they are not able to take into account the change in $D_4$. 
The reconstructed weights yield the correct inflows and outflows with respect to the observations 
and manage to compensate the structural deviation in $D_4$. 
\changed{%
	In \cref{tab: c4_beta}, we observe that the values for $\beta_1$, $\beta_2$, and $\beta_3$ are very well reobtained 
	with similar variances as in \cref{tab: c1_beta}. 
	The value of $\beta_4$ now exceeds significantly the initial value of $0.45$. 
	Recalling that the base demand level $D_4$ was increased from 1 to 1.5 at the lower-level stage, 
	one notices that also the reconstructed value of $\beta_4$ increased by factor 1.5 to compensate 
	to unchanged level of $D_4$ in the parameter reconstruction.
	}%
This allows for a good reconstruction of the observation means, 
but as expected violates the normalization of the base demand weights.

\begin{figure}[ht!]
	\centering
	\includegraphics[scale=0.5]{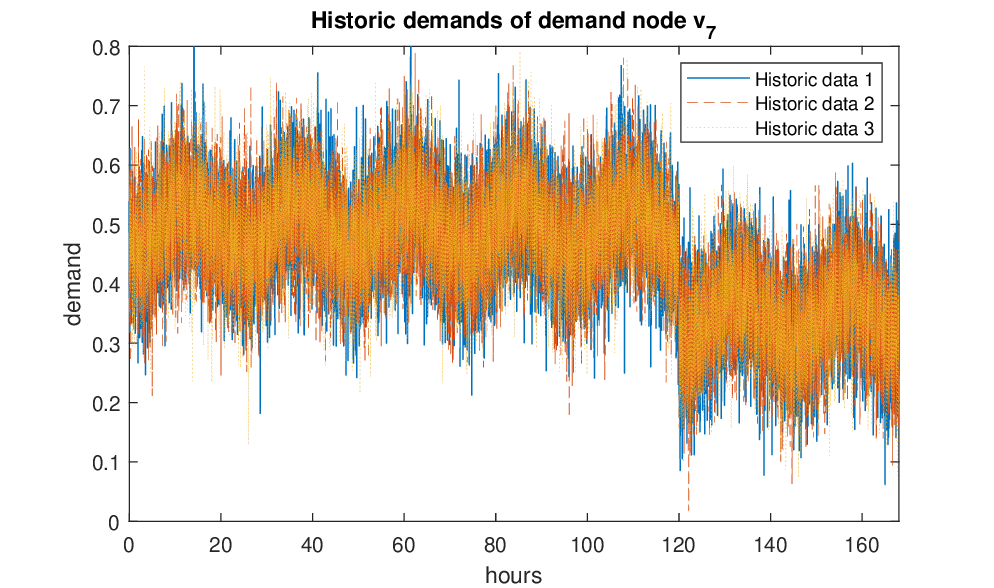}
	\caption{%
		Three of the perturbed historically observed demands for demand vertex $v_7$ in the case of \cref{sec: simulations4}.
		}%
	\label{fig: c4_histData}
\end{figure}

\begin{figure}[ht!]
	\centering
	\includegraphics[scale=0.72]{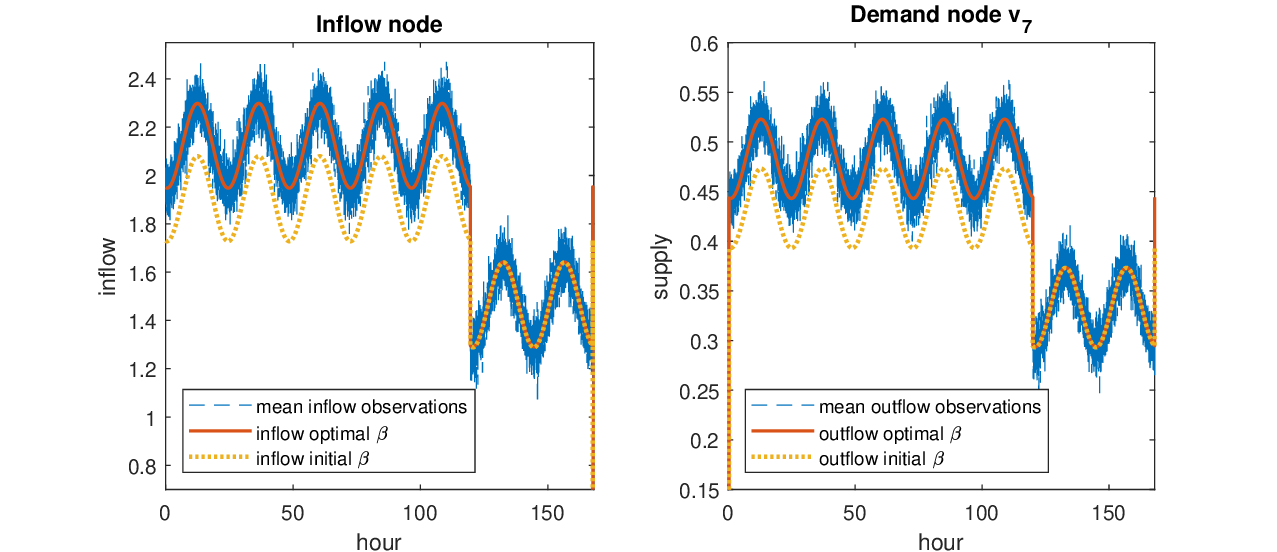}
	\caption{%
		A comparison between the mean realization of the $p=6$ historical in- and outflows 
		with the in- and outflow for the reconstructed $\beta$ in the case of \cref{sec: simulations4} 
		and the initial $\beta$ including the structural deviation in $D_4$.
		}%
	\label{fig: c4_inflow_outflow}
\end{figure}

\begin{table}[ht!]
	\centering\small{
		\begin{tabular}{r||c|c|c|c||c|c|c|c}
			&\multicolumn{4}{c||}{mean} & \multicolumn{4}{c}{variance}\\
			\hline\hline
			&$p=1$& $p=6$ & $p=20$ & $p=200$&$p=1$& $p=6$ & $p=20$ & $p=200$\\
			\hline\hline
			$\beta_1$  &0.1999&0.1998&0.1998&0.2000&0.59e-05&0.14e-05&0.03e-05&0.03e-06\\
			\hline
			$\beta_2$  &0.1505&0.1499&0.1502&0.1499&0.77e-05&0.16e-05&0.03e-05&0.03e-06\\
			\hline
			$\beta_3$  &0.2003&0.2002&0.2003&0.2000&0.53e-05&0.13e-05&0.03e-05&0.03e-06\\
			\hline
			$\beta_4$ &0.6734&0.6752&0.6750&0.6749&1.20e-05&0.20e-05&0.04e-05&0.06e-06\\
		\end{tabular}}
	\caption{%
		Means and variances of the reobtained weights for the base demands for different choices 
		of the number of perturbed historical observations $p$ with additive deviation in $D_4$ in the setting of \cref{sec: simulations4}.
		}%
\label{tab: c4_beta}
\end{table}

\changed{%
When adapting the base demand level accounting for the weekend effect 
with a medium constraint $u_b\equiv 2$, similar to the unconstrained case, 
we observe in \cref{fig: c4_inflow_outflow_constraint} that the in- and outflow rates 
for the reconstructed $\beta$ exceed those for the initial $\beta$, 
but are cut at the maximum inflow level of 2. 
Therefore, in \cref{tab: c4_beta_constraint}, the values of the reconstructed $\beta$ 
are very similar to the ones obtained in \cref{tab: c4_beta} without constraint, 
but have a lower value for $\beta_4$ due to the inflow constraint and the resulting 
lower observed supply. For the more restricted inflow control with $u_b\equiv 1.5$, 
the reconstructed $\beta$ is far from the initial one, 
since by cutting off the inflow, we lose information about the true demand.
}%

\begin{figure}[ht!]
	\centering
	\includegraphics[scale=0.5]{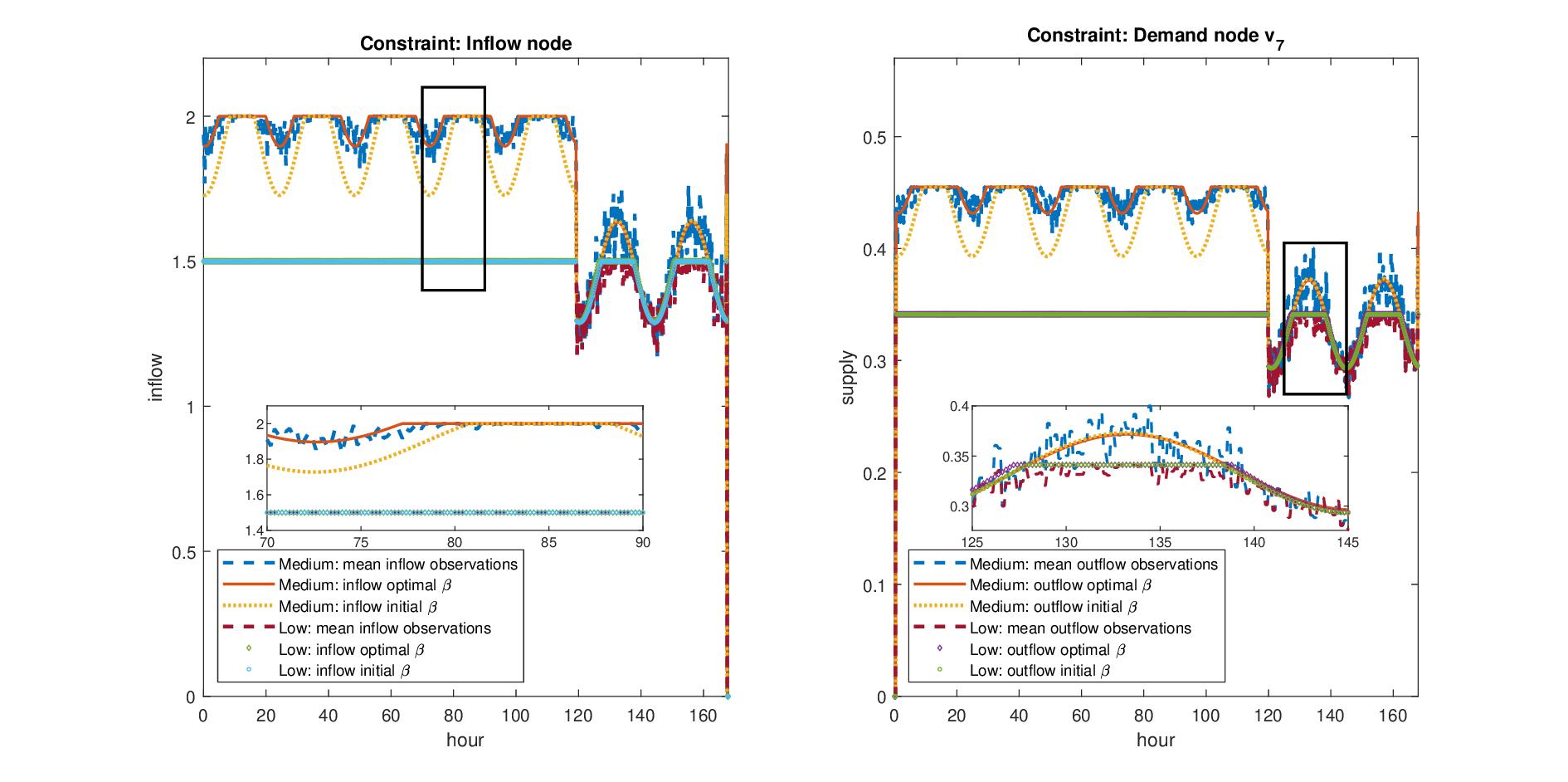}
	\caption{%
		\changed{A comparison between the mean realization of the $p=6$ historical in- and outflows 
			with the in- and outflow for the reconstructed $\beta$ in the case of \cref{sec: simulations4} 
			and the initial $\beta$ including the structural deviation in $D_4$ with two different inflow constraints.}
	}%
	\label{fig: c4_inflow_outflow_constraint}
\end{figure}

\begin{table}[ht!]
	\centering\small{
		\begin{tabular}{r||c|c||c|c||c|c||c|c}
			&\multicolumn{4}{c||}{mean} & \multicolumn{4}{c}{variance}\\
			\hline\hline
			&\multicolumn{2}{c||}{medium} & \multicolumn{2}{c||}{low}&\multicolumn{2}{c||}{medium} & \multicolumn{2}{c}{low}\\
			\hline
			&$p=6$& $p=200$ & $p=6$ & $p=200$&$p=6$& $p=200$ & $p=6$ & $p=200$\\
			\hline\hline
			$\beta_1$  &0.2056&0.2049&0.2429&0.2445&0.18e-04&0.07e-05&0.84e-04&0.14e-05\\
			\hline
			$\beta_2$  &0.1474&0.1489&0.0993&0.0980&0.27e-04&0.08e-05&0.93e-04&0.12e-05\\
			\hline
			$\beta_3$  &0.1915&0.1918&0.1379&0.1358&0.28e-04&0.10e-05&1.15e-04&0.26e-05\\
			\hline
			$\beta_4$  &0.6111&0.6130&0.9867&0.9991&0.38e-04&0.08e-05&3.85e-04&8.18e-05\\
	\end{tabular}}
	\caption{%
		\changed{Means and variances of the reobtained weights for the base demands for different choices 
			of the number of perturbed historical observations $p$ with additive deviation in $D_4$ in the setting of \cref{sec: simulations4} with additional inflow constraints}.
	}%
	\label{tab: c4_beta_constraint}
\end{table}

\subsubsection{Results with no noise in $\beta$ and observations only on Sunday}
\label{sec: simulations7}

In this section, we consider another variant of the framework in \cref{sec: simulations1} 
and do not introduce additional noise or deviations.
Instead, we restrict the observation time of the historical data by adjusting the observation operators $\mathcal{C}$ and $\mathcal D$. 
Therefore, the observation operators only consider $t \in [144,168]$,
i.e., historical data is only taken into account on Sundays, and the model is used to reestimate the choices of $\beta$ from that knowledge for the entire week. 
Furthermore, to avoid hidden information on $\beta_4$ via the normalization constraint, 
similar to \cref{sec: simulations4}, we drop the condition $\sum_{\ell=1}^4 \beta_\ell =1$ in the optimization problem \eqref{eq:upper_level_problem}
and merely require $\beta_\ell\geq 0$, $\ell\in\{1,\ldots,4\}$.
For the illustration of historical data, we refer to \cref{fig: c1_histData} in \cref{sec: simulations1} 
as there are no differences in the historical data.
In \cref{fig: c7_inflow_outflow}, for the in- and outflows, we observe an undersupply 
in the curve of the optimal $\beta$ (red) from Monday to Friday. 
For the weekend, the reconstruction works fine, since past information is available for this time period. 
\cref{tab: c7_beta} underlines that there is no information on the choice of $\beta_4$. 
While the means seem to be close together in the Monte Carlo simulation for the different numbers of observations, 
the very large variances reveal that $\beta_4$ is arbitrary to choose in the case of restricted information.
This, however, is not surprising as, on the one hand, the normalization condition on the weights has been dropped
and, on the other hand, $D_4$ vanishes during the weekend.

\begin{figure}[ht!]
	\centering
	\includegraphics[scale=0.72]{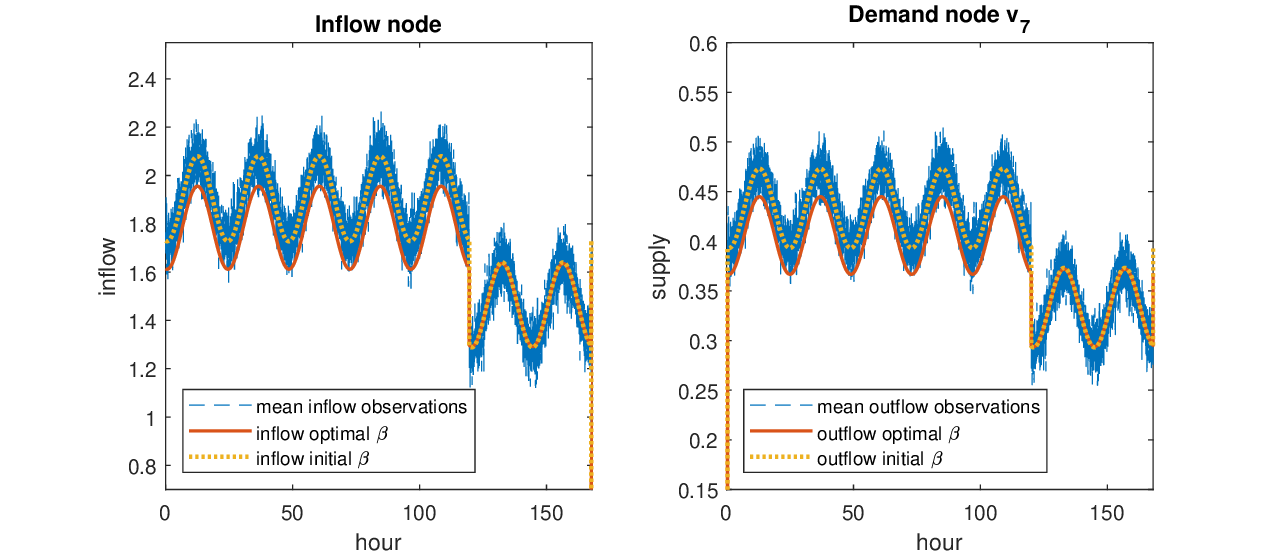}
	\caption{%
		A comparison between the mean realization of the $p=6$ historical in- and outflows 
		with the in- and outflow for the reconstructed $\beta$ in the case of \cref{sec: simulations7} 
		and the initial $\beta$ where observations can only be made on Sundays ($t \in [144,168]$).
		}%
	\label{fig: c7_inflow_outflow}
\end{figure}

\begin{table}[ht!]
	\centering\small{
		\begin{tabular}{r||c|c|c|c||c|c|c|c}
			&\multicolumn{4}{c||}{mean} & \multicolumn{4}{c}{variance}\\
			\hline\hline
			&$p=1$& $p=6$ & $p=20$ & $p=200$&$p=1$& $p=6$ & $p=20$ & $p=200$\\
			\hline\hline
			$\beta_1$ &0.2007&0.1998&0.1997&0.2001 &3.35e-06&0.60e-06&0.20e-06&0.15e-07\\
			\hline
			$\beta_2$ &0.1500&0.1504&0.1503&0.1499&3.76e-06&0.78e-06&0.23e-06&0.20e-07\\
			\hline
			$\beta_3$  &0.1988&0.1999&0.2003&0.2000&4.48e-06&0.77e-06&0.26e-06&0.24e-07\\
			\hline
			$\beta_4$  &0.3622&0.3814&0.3663&0.3718&1.788&0.191&0.075&0.007\\
			\end{tabular}}
			\caption{%
				Means and variances of the reobtained weights for the base demands for different choices 
				of the number of perturbed historical observations $p$ with observations only for $t \in [144,168]$ from \cref{sec: simulations7}.
				}%
			\label{tab: c7_beta}
		\end{table}
		
\changed{%
Under additional inflow constraints, \cref{fig: c7_inflow_outflow_constraint} shows 
the behavior of the in- and outflows according to the choices 
for the initial and reconstructed $\beta$. 
For the medium inflow constraint of level 2, we observe that from Monday to Friday 
the reconstructed in- and outflow deviate significantly 
from the outflow observations, since all the measurements were taken on Sunday. 
This is also underlined by \cref{tab: c7_beta_constraint} where, in this case, 
we find a highly varying reconstructed $\beta_4$, depicting that the choice of $\beta_4$ 
cannot be controlled and has no impact on the objective function. 
During the weekend, the observed outflow and the outflow 
from the optimally reconstructed $\beta$ are in good agreement. 
This is no longer true when considering the lower constraint 
where the inflow is also cut on Sundays. 
Therefore, as in all previous examples, the reconstruction of $\beta$ 
is not successful for the low inflow constraint.
}%

		\begin{figure}[ht!]
			\centering
			\includegraphics[scale=0.5]{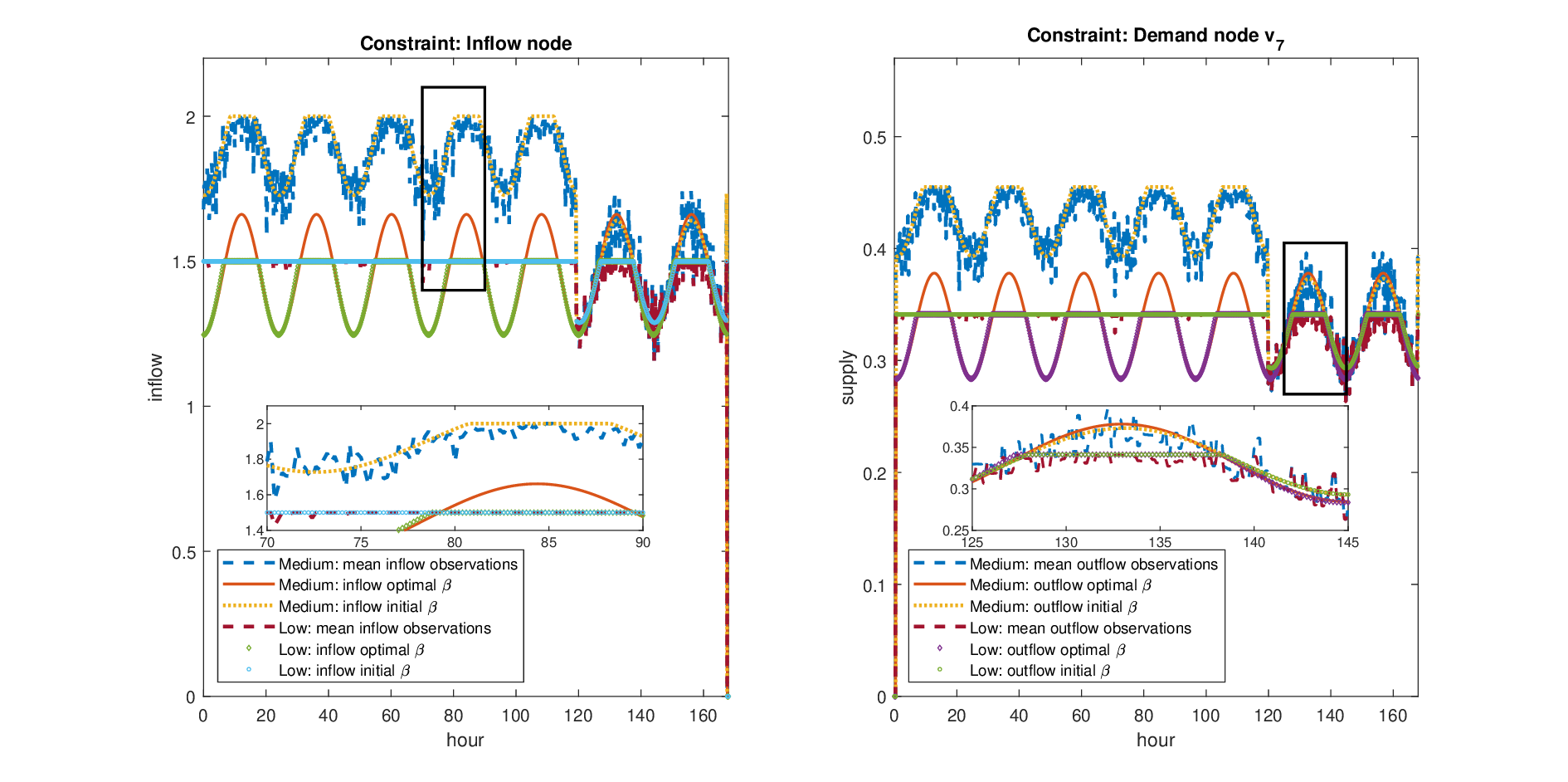}
			\caption{%
				\changed{A comparison between the mean realization of the $p=6$ historical in- and outflows 
				with the in- and outflow for the reconstructed $\beta$ in the case of \cref{sec: simulations7} 
				and the initial $\beta$ where observations can only be made on Sundays ($t \in [144,168]$) with two different inflow constraints.}
			}%
			\label{fig: c7_inflow_outflow_constraint}
		\end{figure}
		
		\begin{table}[ht!]
			\centering\small{
				\begin{tabular}{r||c|c||c|c||c|c||c|c}
					&\multicolumn{4}{c||}{mean} & \multicolumn{4}{c}{variance}\\
					\hline\hline
					&\multicolumn{2}{c||}{medium} & \multicolumn{2}{c||}{low}&\multicolumn{2}{c||}{medium} & \multicolumn{2}{c}{low}\\
					\hline
					&$p=6$& $p=200$ & $p=6$ & $p=200$&$p=6$& $p=200$ & $p=6$ & $p=200$\\
					\hline\hline
					$\beta_1$  &0.1763&0.1760&0.2192&0.2443&0.56e-04&0.16e-05&0.465e-04&0.343e-06\\
					\hline
					$\beta_2$  &0.1665&0.1684&0.1206&0.0966&0.78e-04&0.31e-05&0.481e-04&0.390e-06\\
					\hline
					$\beta_3$  &0.2220&0.2240&0.1642&0.1377&0.86e-04&0.22e-05&0.574e-04&0.571e-06\\
					\hline
					$\beta_4$  &0.3452&0.3578&0.3001&0.3133&0.1934&0.009&0.215&0.013\\
			\end{tabular}}
			\caption{%
				\changed{Means and variances of the reobtained weights for the base demands for different choices 
					of the number of perturbed historical observations $p$ with observations only for $t \in [144,168]$ from \cref{sec: simulations7} with additional inflow constraints}.
			}%
			\label{tab: c7_beta_constraint}
		\end{table}

\section{Conclusions}\label{sec:conclusions}

The present paper is devoted to the inverse demand tracking over transportation networks of tree shape
governed by linear hyperbolic conservation laws. By considering this problem in the
setting of Lebesgue spaces for control variables, we were in position to ensure
the existence of optimal solutions. These were numerically computed by solving an associated
finite-dimensional nonsmooth optimization problem which results from the original hierarchical
model 
\changed{%
	by plugging the pointwise computable Lipschitzian solution operator 
	of the subordinate convex network flow problem
	into the upper-level objective function
}%
before discretizing the resulting single-level problem 
by means of a finite differences scheme. Numerical experiments demonstrated that 
for full time observation operators \changed{in the absence of control constraints}, 
there is a very good agreement
of solutions for the optimal and the benchmark problem.
Restricted-in-time observation operators 
\changed{or the presence of control constraints}
may lead to different solutions, depending on the choice of the underlying base demands
\changed{or the restrictiveness of the employed constraint.}

Our findings give rise to at least \changed{two} interesting directions for future
research. First, it is well known that several network dynamics obey nonlinear hyperbolic
partial differential equations like the flow of gases, see e.g.\ \cite{Bressan2000}, while, 
in this paper, we
focused on merely linear dynamics. We note that incorporating nonlinear dynamics 
in an inverse optimal control problem 
makes the lower-level problem nonconvex and, thus,
\changed{%
an explicit computation of the lower-level solution operator is highly challenging,
see \cref{rem:box_constraints_at_lower_level} as well.
Furthermore, one cannot simply replace the lower-level problem by 
(necessary but not necessarily sufficient) 
optimality conditions without enlarging the feasible set significantly.
Second, in the setting discussed in this paper, it might be worth trying to
solve the finite-dimensional nonsmooth single-level problem with a more enhanced
numerical scheme than just MATLAB's \texttt{patternsearch} method.
Indeed, the special structure of the Lipschitzian lower-level solution operator 
discussed in \cref{sec:special_QPs} allows for an explicit computation of its
so-called Clarke generalized Jacobian, see \cite{Clarke1983},
and the latter can be used to construct a bundle-type algorithm, see \cite{SchrammZowe1992}, 
for the numerical solution of the nonsmooth optimization problem of interest since
the latter merely possesses affine constraints,
see \cite{DempeBard1992} for a related idea.
}%

\paragraph{Acknowledgments} 

The authors wish to thank the two anonymous reviewers whose valuable 
comments and suggestions helped to improve the overall quality of this paper. 
Furthermore, one of the reviewers recommended an inspection of the
PhD thesis \cite{Keimer2014} which is gratefully acknowledged.
Simone Göttlich was supported by the Deutsche Forschungsgemeinschaft (DFG) within the projects GO1920/10-1 and GO1920/11-1.

\paragraph{Conflict of interest} The authors declare no competing interests.

\bibliographystyle{plainnat}
\bibliography{references}

\appendix

\section{\changed{Special quadratic problems with box constraints}}\label{sec:special_QPs}

\changed{%
Let us fix vectors $\theta,v_\textup{d}\in\R^n$ as well as
$v_\textup{a}\in(\R\cup\{-\infty\})^n$ and $v_\textup{b}\in(\R\cup\{\infty\})^n$
such that all entries of $\theta$ are positive 
while $v_\textup{a}\leq v_\textup{b}$ holds componentwise.
For $\Theta:=\diag(\theta)$, we aim to solve
\begin{equation}\label{eq:trivial_QP}\tag{QP}
	\min\limits_v\{
		\tfrac12 v^\top\Theta v-v_\textup{d}^\top v
		\,|\,
		v\in V_\textup{ad}
	\}
\end{equation}
where $V_\textup{ad}\subset\R^n$ is the box given by
\[
	V_\textup{ad}:=\{v\in\R^n\,|\,v_\textup{a}\leq v\leq v_\textup{b}\}.
\]
First, we observe that the objective function in \eqref{eq:trivial_QP} is uniformly convex
while the feasible set is nonempty, closed, and convex.
Hence, \eqref{eq:trivial_QP} possesses a uniquely determined global minimizer $\bar v\in V_\textup{ad}$.
The latter can be characterized in terms of the necessary and sufficient optimality condition
\begin{equation}\label{eq:finite_dimensional_VI}
	\forall v\in V_\textup{ad}\colon\quad
	(\Theta \bar v-v_\textup{d})^\top(v-\bar v)\geq 0.
\end{equation}
We note that $\Theta$ is a positive definite diagonal matrix.
Hence, it is reasonable to set 
\[
	\tilde v:=\Theta^{-1} v_\textup{d}.
\]
Note that $\tilde v_i=\theta_i^{-1}v_{\textup{d},i}$ holds for all $i=1,\ldots,n$.
We will now show that
\begin{equation}\label{eq:solution_of_QP}
	\bar v=\max(v_\textup{a},\min(\tilde v,v_\textup{b}))
\end{equation}
holds true, i.e., that $\bar v$ is the projection of $\tilde v$ onto the box $V_\textup{ad}$.
Note that $\max$ and $\min$ have to be interpreted componentwise in \eqref{eq:solution_of_QP}.
We introduce index sets $I_\textup{a},I_0,I_\textup{b}\subset\{1,\ldots,n\}$ by means of
\begin{align*}
	I_\textup{a}&:=\{i\in\{1,\ldots,n\}\,|\,\tilde v_i< v_{\textup{a},i}\},
	\\
	I_0&:=\{i\in\{1,\ldots,n\}\,|\,v_{\textup{a},i}\leq\tilde v_i\leq v_{\textup{b},i}\},
	\\
	I_\textup{a}&:=\{i\in\{1,\ldots,n\}\,|\,v_{\textup{b},i}< \tilde v_i\}.
\end{align*}
Clearly, these sets form a disjoint partition of $\{1,\ldots,n\}$, 
and \eqref{eq:solution_of_QP} can be rewritten as
\[
	\forall i\in\{1,\ldots,n\}\colon\quad
	\bar v_i
	=
	\begin{cases}
		v_{\textup{a},i}	&	i\in I_\textup{a},\\
		\tilde v_i			&	i\in I_0,\\
		v_{\textup{b},i}	&	i\in I_\textup{b}.
	\end{cases}
\]
Pick $v\in V_\textup{ad}$ arbitrarily. 
Taking together all of the above findings, we end up with
\begin{align*}
	&(\Theta\bar v-v_\textup{d})^\top(v-\bar v)\\
	&\qquad
	=
	\sum_{i\in I_\textup{a}}(\theta_iv_{\textup{a},i}-v_{\textup{d},i})
		\underbrace{(v_i-v_{\textup{a},i})}_{\geq 0}
	+
	\sum_{i\in I_\textup{b}}(\theta_iv_{\textup{b},i}-v_{\textup{d},i})
		\underbrace{(v_i-v_{\textup{b},i})}_{\leq 0}
	+
	\sum_{i\in I_0}\underbrace{(\theta_i\tilde v_i-v_{\textup{d},i})}_{=0}(v_i-\tilde v_i)
	\\
	&\qquad
	\geq
	\sum_{i\in I_\textup{a}}
		\underbrace{(\theta_i\tilde v_i-v_{\textup{d},i})}_{=0}(v_i-v_{\textup{a},i})
	+
	\sum_{i\in I_\textup{b}}
		\underbrace{(\theta_i\tilde v_i-v_{\textup{d},i})}_{=0}(v_i-v_{\textup{b},i})
	=
	0,
\end{align*}
and this shows that $\bar v$ constructed as in \eqref{eq:solution_of_QP} is, indeed, 
a solution of \eqref{eq:finite_dimensional_VI} and, thus, the
uniquely determined global minimizer of \eqref{eq:trivial_QP}.
}%

\end{document}